\let\mathcal\mathscr
\def\B{{\mathbb B}}
\def\R{{\mathbb R}}
\def\Q{{\mathbb Q}}
\def\C{{\mathbb C}}
\def\supp{\mathop{\rm Supp}\nolimits}
\def\phi{\varphi}
\def\a{{\alpha}}
\def\e{{\varepsilon}}
\def\D{{\Delta}}
\def\G{{\Gamma}}
\def\cI{{\mathcal I}}
\def\cO{{\mathcal O}}
\def\nn{\mathop{\rm NNef}\nolimits}
\def\vn{\mathop{v_{\rm num}}\nolimits}
\def\exc{\mathop{\rm Exc}\nolimits}
\def\div{\mathop{\rm div}\nolimits}
\def\le{\leqslant}
\def\ge{\geqslant}
\newtheorem{thm}{Theorem}[section]
\newtheorem{defn}[thm]{Definition}
\newtheorem{prop}[thm]{Proposition}
\newtheorem{lem}[thm]{Lemma}
\newtheorem{con}[thm]{Conjecture}
\newtheorem*{thmA}{Theorem A} 
\newtheorem*{thmB}{Theorem B}
\title[Uniruledness of  stable base loci of adjoint linear systems]{Uniruledness of  stable base loci of adjoint linear systems with and without Mori Theory}
\author[S. Boucksom, A. Broustet and G.~Pacienza]{S\'ebastien Boucksom, Ama\"el Broustet and Gianluca Pacienza}
\date{\today}
\begin{document}
\maketitle
{\let\thefootnote\relax
\footnote{\hskip-1.2em
\textbf{Key-words :} big and pseff adjoints divisors; stable, non-ample and non-nef base locus; rational curves.\\
\noindent
\textbf{A.M.S.~classification :} 14J40. 
 }}
\numberwithin{equation}{section}

%%%%%%%%%%%%%%%%%%%%%%%%%
%
\begin{abstract} We explain how to deduce from recent results in the Minimal Model Program a general uniruledness theorem for base loci of adjoint divisors. We also show how to recover special cases by extending a technique introduced by Takayama. \end{abstract}
%
%%%%%%%%%%%%%%%%%%%%%%%%%%
%
%%%%%%%%%%%%%%%%%%%%%%%%
%
\section*{Introduction}
%
%%%%%%%%%%%%%%%%%%%%%%%%

Let $X$ be a normal projective variety defined over $\C$ (or any algebraically closed field of characteristic $0$) and let $D$ be an $\R$-divisor on $X$ (where $\R$-divisor will mean $\R$-Cartier $\R$-divisor unless otherwise specified). Following~\cite{BCHM} one introduces the (real) \emph{stable base locus} of $D$ as
\begin{equation}\label{equ:stable}\B(D):=\bigcap\{\supp E,\,E\text{ effective }\R\text{-divisor},\,E\sim_\R D\},
\end{equation}
where $E\sim_\R D$ means that $E$ is $\R$-linearly equivalent to $D$, i.e. $E-D$ is an $\R$-linear combination of principal divisors $\div(f),\,f\in\C(X)$. When $D$ is a $\Q$-divisor $\B(D)$ coincides with the usual stable base locus (cf.~Proposition~\ref{prop:stable} below). 

As in~\cite{ELMNP1} one then defines the \emph{augmented base locus} of $D$ by\begin{equation}\label{equ:augmented}
\B_+(D):=\bigcap_{m>0}\B(D-\frac{1}{m}A)
\end{equation}
and the \emph{restricted base locus} of $D$ by
\begin{equation}\label{equ:restricted}
\B_-(D):=\bigcup_{m>0}\B(D+\frac{1}{m}A)
\end{equation}
where $A$ is an ample divisor, the definition being independent of $A$. We thus have the inclusions
$$
\B_-(D)\subset\B(D)\subset\B_+(D).
$$

The augmented base locus $\B_+(D)$ is Zariski closed and satisfies 
$$
\B_+(D)\subsetneq X\Longleftrightarrow D\text{ big},
$$ 
$$
\B_+(D)=\emptyset\Longleftrightarrow D\text{ ample}.
$$ 
Augmented base loci are also known as \emph{non-ample loci} and have been extensively studied in relation with the asymptotic behavior of linear series (see~\cite{Nak,ELMNP1, ELMNP2} and~\cite{B} for the analytic counterpart). 

The restricted base locus $\B_-(D)$ is an at most  countable union of Zariski closed sets - it might not be Zariski closed in general even though no specific example seems to be known for the moment. We have 
$$
\B_-(D)\subsetneq X\Longleftrightarrow D\text{ pseudoeffective},
$$ 
$$
\B_-(D)=\emptyset\Longleftrightarrow D\text{ nef}.
$$ 
On the other hand the \emph{non-nef locus} $\nn(D)$ of an $\R$-divisor $D$~\cite{B,BDPP}, called the \emph{numerical base locus} in~\cite{Naka}, is defined in terms of the asymptotic or numerical vanishing orders attached to $D$ (cf. Definition~\ref{defn:nonnef} below). We always have
$$
\nn(D)\subset\B_-(D)
$$
and equality was shown to hold when $X$ is smooth in~\cite{ELMNP1}, but seems to be unknown when $X$ is an arbitrary normal variety.

The goal of the present paper is to investigate the uniruledness properties of the above loci in the case of adjoint divisors. After having collected basic facts in Section 1, we explain in Section 2 how to obtain the following general result using known parts of the Minimal Model Program~\cite{Kaw,BCHM}. 

\begin{thmA} Let $X$ be a normal projective variety and let $\D$ be an effective $\R$-Weil divisor such that $(X,\D)$ is klt. 
\begin{enumerate} 
\item[(i)] We have $\nn(K_X+\D)=\B_-(K_X+\D)$ and each of its irreducible components is uniruled. 
\item[(ii)] If $K_X+\D$ is furthermore big then 
$$
\nn(D)=\B_-(K_X+\D)=\B(K_X+\D)
$$ 
and every irreducible component of $\B_+(K_X+\D)$ is uniruled as well. 
\end{enumerate}
\end{thmA}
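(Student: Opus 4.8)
The plan is to reduce everything to the case where $K_X+\D$ is big, which I would treat by the Minimal Model Program, and then to deduce~(i) in general by perturbing with an ample divisor. (One may assume the ground field is $\C$: base loci commute with field extension and uniruledness is a geometric property.)

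\emph{The big case.} Assume $K_X+\D$ is big, so that $(X,\D)$ is a klt pair of log general type. By~\cite{BCHM} I would run a $(K_X+\D)$-MMP; it terminates at a minimal model $f\colon X\dra X'$ with $K_{X'}+\D'$ nef and big, hence semiample by the base-point-free theorem~\cite{Kaw}, so $\B(K_{X'}+\D')=\emptyset$. Writing $f=\phi_N\circ\cdots\circ\phi_1$ as a composition of elementary steps, let $\cE\subset X$ be the union of the strict transforms of the exceptional divisor of each divisorial step and of the flipping locus of each flip. I would then prove
\[
\nn(K_X+\D)=\B_-(K_X+\D)=\B(K_X+\D)=\cE,
\]
as follows. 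The negativity lemma identifies the negative part $N_\sigma(K_X+\D)$ of the Nakayama--Zariski decomposition with the sum of the $f$-exceptional divisors, and $P_\sigma(K_X+\D)$ with the birational transform of the semiample class $K_{X'}+\D'$; moving effective representatives of $K_X+\D$ off any point of $X\setminus\cE$ gives $\B(K_X+\D)\subset\cE$. Conversely each component of $\cE$ lies in $\B_-(K_X+\D)$: the $f$-exceptional divisors belong to $N_\sigma(K_X+\D)$, and a curve sweeping out a flipping locus has strictly negative intersection with $K_X+\D+\tfrac1m A$ for $m\gg0$, hence that locus is contained in $\B(K_X+\D+\tfrac1m A)\subset\B_-(K_X+\D)$; the same analysis puts $\cE$ inside $\nn(K_X+\D)$, and $\nn\subset\B_-\subset\B$ closes the circle. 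Finally, each $\phi_i$ is an elementary $(K+\D)$-negative extremal contraction of a klt pair, so its exceptional locus is covered by rational curves~\cite{Kaw}; since uniruledness is a birational invariant of projective varieties, every irreducible component of $\cE$ is uniruled. This yields the two equalities and the uniruledness statement in the big case of~(i) and~(ii).

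\emph{The augmented base locus.} Still assuming $K_X+\D$ big, Kodaira's lemma gives $K_X+\D\sim_\R A_0+\Theta$ with $A_0$ ample and $\Theta\ge0$. For $0<t\ll1$ the pair $(X,\D+t\Theta)$ is klt, $K_X+\D+t\Theta\sim_\R(1+t)(K_X+\D)-tA_0$ is still big, and, since the augmented base locus is attained by a sufficiently small perturbation~\cite{ELMNP1},
\[
\B(K_X+\D+t\Theta)=\B\bigl((K_X+\D)-\tfrac{t}{1+t}A_0\bigr)=\B_+(K_X+\D)
\]
for $t$ small enough. Applying the big case to $(X,\D+t\Theta)$ then shows every irreducible component of $\B_+(K_X+\D)$ is uniruled, which completes~(ii).

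\emph{The general case of (i).} If $K_X+\D$ is not pseudoeffective, then $K_X$ is not pseudoeffective either, so a resolution of $X$ --- hence $X$ --- is uniruled by~\cite{BDPP}, while $\nn(K_X+\D)=\B_-(K_X+\D)=X$; so assume $K_X+\D$ pseudoeffective and fix an ample $A$. Then $\B_-(K_X+\D)=\bigcup_{m\ge1}\B(K_X+\D+\tfrac1m A)$, an increasing union of Zariski-closed sets, and each $K_X+\D+\tfrac1m A$ is big. Choosing $0\le\D_m\sim_\R\D+\tfrac1m A$ with $(X,\D_m)$ klt (e.g.\ $\D_m=\D+\tfrac1{mn}H$ for a general $H\in|nA|$ with $n\gg0$), the big case shows every irreducible component of $\B(K_X+\D_m)=\B(K_X+\D+\tfrac1m A)$ is uniruled. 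If $Z$ is an irreducible component of $\B_-(K_X+\D)$, then --- since a variety over $\C$ is not a countable union of proper Zariski-closed subsets --- $Z\subset\B(K_X+\D+\tfrac1m A)$ for some $m$; as that set is contained in $\B_-(K_X+\D)$ and $Z$ is a component of the latter, $Z$ is in fact an irreducible component of $\B(K_X+\D+\tfrac1m A)$, hence uniruled. Lastly
\[
\B_-(K_X+\D)=\bigcup_m\B(K_X+\D+\tfrac1m A)=\bigcup_m\nn(K_X+\D+\tfrac1m A)=\nn(K_X+\D),
\]
where the middle equality is the big case and the last holds because adding an ample divisor does not enlarge $\nn$, together with the lower semicontinuity of the numerical vanishing orders $\sigma_E$ on the pseudoeffective cone.

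\emph{Main obstacle.} Granting the deep inputs we are allowed to use --- \cite{BCHM} (termination of the MMP for klt pairs of log general type) and the base-point-free theorem --- the technical heart is the big case: pinning down $\B$, $\B_-$ and $\nn$ as exactly the exceptional locus $\cE$ of the minimal model, showing in particular that the lower-dimensional flipping contributions already lie in $\nn(K_X+\D)$ and not merely in $\B_-(K_X+\D)$, and verifying that uniruledness of these loci survives all the birational transformations performed along the MMP.
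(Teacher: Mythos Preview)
Your overall architecture matches the paper's: reduce to the big case by adding a small ample, then run the $(K_X+\D)$-MMP and invoke \cite{Kaw} for uniruledness of exceptional loci. The reduction of~(i) to the big situation and the non-pseudoeffective case are handled just as in the paper. Your treatment of $\B_+$ is genuinely different and worth highlighting: the paper tracks a fixed component $V$ of $\B_+(K_X+\D)$ through each MMP step using Proposition~\ref{prop:biratB+}, showing the strict transform remains a component of $\B_+$ on the next model, until it is caught either in the exceptional locus of some step or in $\exc(\rho)$ for the final map $\rho$ to the canonical model. Your Kodaira-decomposition trick --- writing $(1+t)(K_X+\D)-tA_0\sim_\R K_X+(\D+t\Theta)$ so that $\B(K_X+\D+t\Theta)=\B_+(K_X+\D)$ for $0<t\ll1$ --- reduces $\B_+$ to $\B$ of a nearby klt pair and avoids Proposition~\ref{prop:biratB+} altogether; this is a clean and legitimate alternative.

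The big-case argument, however, has the gap you yourself flag. You claim that a curve $C$ sweeping out the flipping locus of the $i$-th step has $(K_X+\D+\tfrac1m A)\cdot\tilde C<0$ on $X$ for $m\gg0$. For $i=1$ this is clear, but for $i>1$ the curve lives on $X_{i-1}$; on a common resolution $\mu^*(K_X+\D)=\mu_{i-1}^*(K_{X_{i-1}}+\D_{i-1})+G$ with $G\ge0$ supported over the earlier flipped loci, and the nonnegative contribution $G\cdot C_Y$ can destroy the negativity of $(K_X+\D)\cdot\tilde C$. So the inclusion $\cE\subset\nn(K_X+\D)$, and with it the asserted identification $\B(K_X+\D)=\cE$, is not established by your argument. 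The paper avoids this comparison of intersection numbers across several steps entirely: instead of describing the whole locus at once, it follows \emph{one} component $V$ of $\B_+(K_X+\D)$ through the MMP, and at each step uses only Proposition~\ref{prop:biratB+} (behaviour of $\B_+$ under a single birational morphism, plus the negativity lemma) to see that either $V\subset\exc(\pi)$, hence uniruled by~\cite{Kaw}, or $V'$ is again a component of $\B_+(K_{X'}+\D')$. After termination and the base-point-free theorem, $\B_+(K_{X'}+\D')=\exc(\rho)$ and~\cite{Kaw} applies once more. If you want to salvage your global description $\B(K_X+\D)=\cE$, you should replace the curve-intersection argument by the ample-model computation (as in Proposition~\ref{prop:baseloci}) and then check that $\pi(\supp F)$ coincides with the locus where $X\dra X_{\min}$ fails to be an isomorphism.
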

As already noticed in~\cite{taka2} the above uniruledness results both fail in general for the more general case of where $(X,\D)$ has log-canonical singularities, even in the log-smooth case (cf. Example 6.4).

The special case of Theorem A where $X$ is smooth and either $K_X$ or $\D$ vanishes was obtained by S.~Takayama in~\cite{taka2} by a completely different (and more direct) method, which combined his extension result for log-pluricanonical forms (see \cite[Theorem 4.5]{taka}) with the characterization of uniruled varieties in terms of the non-pseudo-effectivity of the canonical class already mentioned. 

In Section 3, we show more generally how to obtain using Takayama's method the following special cases of Theorem A. 

 \begin{thmB} Let $X$ be a smooth projective variety and $L$ a line bundle on $X$ such that either $-K_X$ of $L-K_X$ is nef. 
 \begin{enumerate} 
\item[(i)] If $L$ is pseudoeffective, then every irreducible component  of the non-nef locus $\B_-(L)$ is uniruled.
\item[(ii)] If $L$ is furthermore big, then every irreducible component  of the stable base locus $\B(L)$ or  of the non-ample locus $\B_+(L)$ is uniruled.
\end{enumerate}
\end{thmB}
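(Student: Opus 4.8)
The plan is to follow Takayama's method from~\cite{taka2}, extending it so as to accommodate a nef twist. I would argue by contradiction: assuming that an irreducible component $Z$ of the base locus under consideration is \emph{not} uniruled, I would construct a non-zero pluri-log-canonical section on a model of $Z$, extend it to $X$ by the extension theorem~\cite[Theorem 4.5]{taka}, and read off from it an effective divisor avoiding $Z$ that is $\Q$-linearly equivalent to a multiple of $L$, contradicting the fact that $Z$ lies in $\B(L)$.

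First I would reduce the three statements to a single one. In~(ii) for $\B(L)$, set $D:=L$; for $\B_+(L)$, by~\cite{ELMNP1} one has $\B_+(L)=\B\bigl(L-\tfrac1m A\bigr)$ for a fixed ample $A$ and $m\gg0$, so set $D:=L-\tfrac1m A$; and in~(i), from the definition of $\B_-(L)$ (cf.~\cite{ELMNP1}), an irreducible component $Z$ of $\B_-(L)$ is an irreducible component of $\B\bigl(L+\tfrac1m A\bigr)$ for $m\gg0$, so set $D:=L+\tfrac1m A$. In each case $D$ is a big $\Q$-divisor and $Z$ is a component of $\B(D)$; moreover the hypothesis that $-K_X$ or $L-K_X$ be nef gives, for every $\delta>1$ (and $m\gg0$ where relevant), that $\delta D-K_X$ is big: immediately if $-K_X$ is nef, and by writing $\delta D-K_X$ as the sum of the nef class $L-K_X$ and a class of the form $(\delta-1)L\pm\tfrac{\delta}{m}A$, which is big for $m\gg0$, if $L-K_X$ is nef. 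So it suffices to prove: \emph{if $D$ is a big $\Q$-divisor, $Z$ a component of $\B(D)$, and $\delta D-K_X$ is big for some $\delta>1$, then $Z$ is uniruled.}

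Then I would run the core argument. Let $\mu\colon\widetilde Z\to Z$ be a resolution; if $Z$ were not uniruled, neither would $\widetilde Z$ be, so $K_{\widetilde Z}$ would be pseudoeffective by~\cite{BDPP}. Since $D$ is big and $Z\subseteq\B(D)$, a standard perturbation --- produce a divisor of high multiplicity along $Z$ out of sections of a large multiple of an ample summand in a Kodaira decomposition $D\sim_\Q(\text{ample})+(\text{effective})$, then tie-break --- yields an effective $\Q$-divisor $\Gamma\sim_\Q\delta D-K_X$ (using that the latter is big) such that, on a log resolution $\pi\colon X'\to X$ of $(X,\Gamma)$, a prime divisor $S$ with $\pi(S)=Z$ appears with coefficient one, the pair is log canonical and $S$ is the only non-klt place over the generic point of $Z$, and the remaining boundary contains an ample $\Q$-divisor transverse to $S$. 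Then $K_X+\Gamma\sim_\Q\delta D$, so $S$ lies in the stable base locus of $K_X+\Gamma$; while adjunction gives $(K_{X'}+S+B)|_S=K_S+B_S$ with $B_S$ effective and carrying an ample piece, and $K_S$ is pseudoeffective --- directly when $\codim Z=1$, since then $S$ is birational to $\widetilde Z$, and via Kawamata's subadjunction (the moduli part being nef) applied to the minimal lc centre $Z$ in general. Hence $K_S+B_S$ is big and $H^0\bigl(S,m(K_S+B_S)\bigr)\neq0$ for $m$ large and divisible. To conclude I would apply~\cite[Theorem 4.5]{taka}: its positivity hypothesis holds precisely because the boundary contains an ample divisor transverse to $S$, and the nef class $-K_X$ (or $L-K_X$), restricted to $S$, stays nef and is harmless. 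The restriction map $H^0\bigl(X',m(K_{X'}+S+B)\bigr)\to H^0\bigl(S,m(K_S+B_S)\bigr)$ is then surjective for $m$ large and divisible, so a section of the target not vanishing at the generic point of $Z$ lifts to a section of $m(K_{X'}+S+B)$, equivalently (pushing down) to an effective $\Q$-divisor $\Q$-linearly equivalent to $m\delta D$ whose support does not contain $Z$; dividing by $m\delta$ contradicts $Z\subseteq\B(D)=\B(\delta D)$.

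The hard part will be the construction above, and specifically the bookkeeping that makes~\cite[Theorem 4.5]{taka} applicable: arranging $\Gamma$ so that $S$ occurs with coefficient exactly one while the rest of the boundary has no component dominating $Z$ and yet still contains an ample divisor transverse to $S$ --- this tie-breaking is where the bigness of $\delta D-K_X$ is spent --- and, when $\codim Z\ge 2$ (so that $S\to Z$ has positive-dimensional, rationally connected fibres and $S$ is itself uniruled irrespective of $Z$), correctly routing the pseudo-effectivity of $K_{\widetilde Z}$ into bigness of the restricted class through the moduli term of the subadjunction formula. The nef twist, by contrast, is essentially free, which is the whole point of the hypothesis: restricting a nef class to $S$ keeps it nef, so it never destroys positivity, and this is exactly the feature that allows Takayama's original cases ($-K_X$ or $L-K_X$ equal to $0$) to be pushed to the stated generality.
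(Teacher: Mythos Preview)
Your reduction of all three cases to the single claim --- $Z$ uniruled whenever $D$ is a big $\Q$-divisor, $Z$ a component of $\B(D)$, and $\delta D-K_X$ big for some $\delta>1$ --- is correct and tidier than the paper's case split. The gap is in the core argument when $\codim Z\ge 2$. You assert that $K_S$ is pseudoeffective ``via Kawamata's subadjunction,'' but this is false: as you yourself observe in your final paragraph, the fibres of $S\to Z$ are positive-dimensional and rationally connected, so $K_S$ restricted to a general fibre is not pseudoeffective, and hence neither is $K_S$. Consequently $K_S+B_S$ is not big either: at best it is $\Q$-linearly the pullback along $S\to\widetilde Z$ of a class on $\widetilde Z$ (this is the canonical bundle formula for that fibration, which is a different statement from Kawamata subadjunction for the lc centre $Z\subset X$), so its Iitaka dimension is at most $\dim Z<\dim S$. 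What the extension step actually needs is only $H^0(S,m(K_S+B_S))\neq 0$ for $m\gg 0$, and that does follow once one shows the class on $\widetilde Z$ is big --- it is $K_{\widetilde Z}$ (pseudoeffective by hypothesis) plus a nef moduli part, an effective boundary, and the restriction of the ample piece --- but your sketch does not arrive there, and the two incorrect claims about $K_S$ and about bigness of $K_S+B_S$ would need to be replaced by this analysis.

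By contrast, the paper's direct proof quotes Takayama's criterion \cite[Corollary~3.3]{taka2} (Theorem~\ref{thm:takauniruled} here) as a black box and never opens up the extension argument. The work goes instead into writing the relevant divisor as $K_X+D$ with $D$ a genuine \emph{line bundle} admitting a decomposition $D\equiv A+E$ with $V$ a maximal lc centre of $(X,E)$ (Lemma~\ref{lem:decomp}, Proposition~\ref{prop1}); this is straightforward when $V\not\subset\B_-(L)$, but when $V\subset\B_-(L)$ the natural move $L\mapsto L+\e A$ destroys integrality, and a somewhat delicate diophantine choice of integers (Lemma~\ref{claim}) is needed to restore it. Your uniform reduction dodges that integrality issue entirely, at the price of having to run the extension machinery for $\Q$-divisors yourself --- which is feasible, but only after repairing the $\codim Z\ge 2$ step as above.
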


{\bf Acknowledgements.} The authors would like to thank St\'ephane Druel for interesting exchanges related to this work. Part of this work was done by G.P. during his stay at the Universit\`a di Roma "La Sapienza". He wishes to thank Kieran O'Grady for making this stay very pleasant and stimulating  and for providing the financial support. A.B. thanks Laurent Bonavero for  stimulating conversations on this subject.

%%%%%%%%%%%%%%%%%%%%%%%%
%
\section{Preliminaries}\label{S:prel}
Unless otherwise specified we will use the standard notation, definitions and terminology (cf. for instance \cite{KM}). By convention divisor (resp. $\Q$-divisor, $\R$-divisor) will mean Cartier divisor (resp. $\Q$-Cartier, $\R$-Cartier) unless otherwise specified. 

\subsection{Approximation by $\Q$-divisors}
Let $X$ be a normal projective variety. Recall that the stable base locus of a $\Q$-divisor, that we temporarily denote by $\B_\Q(D)$, can be described as follows:
$$
\B_\Q(D):=\bigcap\{\supp E,\,\,E\text{ effective }\Q\text{-divisor},\,E\sim_\Q D\}.
$$
\begin{prop}\label{prop:stable} Let $D$ be a $\Q$-divisor on $X$. Then its real stable locus $\B(D)$ defined by (\ref{equ:stable}) coincides with the usual stable locus $\B_\Q(D)$. 
\end{prop}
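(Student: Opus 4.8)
The plan is to show the two inclusions $\B(D)\subset\B_\Q(D)$ and $\B_\Q(D)\subset\B(D)$ separately, exploiting that $D$ is a $\Q$-divisor. The inclusion $\B(D)\subset\B_\Q(D)$ is the easy direction: every effective $\Q$-divisor $E\sim_\Q D$ is in particular an effective $\R$-divisor with $E\sim_\R D$, so the intersection in (\ref{equ:stable}) runs over a larger family than the one defining $\B_\Q(D)$, hence gives a smaller (or equal) locus. One small point to check here is that $\sim_\Q$ implies $\sim_\R$, i.e. that a $\Q$-linear combination of principal divisors is also an $\R$-linear combination of principal divisors — which is immediate.

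The substantial direction is $\B_\Q(D)\subset\B(D)$, equivalently: if $x\notin\B(D)$ then $x\notin\B_\Q(D)$. So suppose there is an effective $\R$-divisor $E\sim_\R D$ with $x\notin\supp E$. By definition $E-D=\sum_i r_i\div(f_i)$ for finitely many $r_i\in\R$ and $f_i\in\C(X)$. The idea is to rationalize the coefficients $r_i$. Consider the affine subspace $V$ of $\Div_\R(X)$ of divisors of the form $D+\sum_i s_i\div(f_i)$ with $s=(s_i)\in\R^k$; this is cut out inside the finite-dimensional space spanned by $D$ and the $\div(f_i)$ by linear equations with rational coefficients (the condition that the coefficient along each prime divisor appearing is what it is), so the rational points are dense in it, and in fact $V$ itself is a rational affine subspace. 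The conditions ``coefficient along each prime divisor is $\ge 0$'' and ``coefficient along the finitely many prime divisors through $x$ is $=0$'' define a rational polyhedron $P\subset V$, and by hypothesis $E\in P$, so $P$ is nonempty. A nonempty rational polyhedron contains a rational point $E'$; then $E'$ is an effective $\Q$-divisor, $E'\sim_\Q D$ (since $E'-D$ is a rational combination of the $\div(f_i)$), and $x\notin\supp E'$ because the coefficient of $E'$ along every prime through $x$ vanishes. Hence $x\notin\B_\Q(D)$.

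The main obstacle — really the only place needing care — is the finite-dimensionality and rationality bookkeeping in the second inclusion: one must observe that although $\Div_\R(X)$ is infinite-dimensional, everything relevant (namely $D$, the finitely many $\div(f_i)$, and hence all divisors in $V$) lives in the finite-dimensional subspace they span, that the defining conditions for $V$ and for the polyhedron $P$ have rational coefficients because $D$ is a $\Q$-divisor and principal divisors are integral, and therefore that $P$ is a genuine rational polyhedron and its being nonempty forces a rational point. Once this is set up, picking the rational point and reading off that it is effective, $\Q$-linearly equivalent to $D$, and avoids $x$ is routine. One may alternatively phrase this without polyhedra: perturb $E$ by adding small rational multiples of the $\div(f_i)$ to make the coefficients $r_i$ rational while keeping effectivity and keeping the coefficients along primes through $x$ equal to zero, using that these are finitely many linear constraints — but the polyhedral formulation is cleanest.
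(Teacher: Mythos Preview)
Your argument is correct and follows essentially the same route as the paper: both prove the nontrivial inclusion by a finite-dimensional rationality argument, observing that the relevant constraints (being $\R$-linearly equivalent to the $\Q$-divisor $D$, being effective, avoiding $x$) are rational linear conditions in finitely many real parameters, so a real solution forces a rational one. The paper packages this as a separate approximation lemma producing a $\Q$-divisor with the \emph{same} support as $E$ (working in the span of the components of $E$ rather than in your parameter space of the $s_i$), a formulation it reuses later, but the underlying idea is identical.
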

\begin{proof} It is obvious that $\B(D)\subset\B_\Q(D)$. Conversely let $E$ be an effective $\R$-divisor such that $E\sim_\R D$. By Lemma~\ref{lem:approx} below we may find an effective $\Q$-divisor $E'\sim_\R D$ with the same support as $E$ and the result follows.
\end{proof}

\begin{lem}\label{lem:approx} Let $D$ be a $\Q$-Cartier divisor and let $E$ be an effective $\R$-Cartier divisor such that $E\sim_\R D$. Then $E$ may be written as a (coefficient-wise) limit of effective $\Q$-Cartier divisors $E_j$ with the same support as $E$ and such that $E_j\sim_\R D$. 
\end{lem}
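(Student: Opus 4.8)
The plan is to reduce the statement to the density of rational points in a rational affine subspace of a Euclidean space. First write the hypothesis out explicitly: since $E\sim_\R D$ there are real numbers $c_1,\dots,c_r$ and rational functions $f_1,\dots,f_r\in\C(X)$ with $E=D+\sum_{i=1}^r c_i\,\div(f_i)$. Let $S$ be the finite set of prime divisors occurring in $D$, in $E$, or in some $\div(f_i)$; every divisor below is supported on $S$. For $P\in S$ put $d_P:=\ord_P(D)\in\Q$ and $e_{i,P}:=\ord_P(f_i)\in\Z$, so that $\ord_P(E)=d_P+\sum_i c_i\,e_{i,P}$ for every $P\in S$.

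Next I would introduce the affine subspace
\[
V:=\Bigl\{\,x=(x_1,\dots,x_r)\in\R^r \ :\ d_P+\sum_{i=1}^r x_i\,e_{i,P}=0 \text{ for all }P\in S\setminus\supp(E)\,\Bigr\}.
\]
It is cut out by finitely many affine-linear equations with rational coefficients $e_{i,P}$ and rational constant terms $-d_P$, and it is nonempty since $(c_1,\dots,c_r)\in V$ by the very definition of $\supp(E)$. The key point — and the step that has to be handled with care, as it is precisely where the hypothesis that $D$ is $\Q$-Cartier enters — is that a nonempty affine subspace of $\R^r$ defined over $\Q$ has $V\cap\Q^r$ dense in $V$: solving the defining linear system over $\Q$ yields a particular rational solution $v_0$, and then $V=v_0+W$ with $W$ the real kernel of an integer matrix; such a $W$ has a basis of rational vectors, so the rational linear combinations of that basis are dense in $W$ and, translated by $v_0$, form a dense set of rational points in $V$.

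Finally, choose $c^{(j)}=(c_1^{(j)},\dots,c_r^{(j)})\in V\cap\Q^r$ with $c^{(j)}\to(c_1,\dots,c_r)$ and set $E_j:=D+\sum_{i=1}^r c_i^{(j)}\,\div(f_i)$. Since $D$ is $\Q$-Cartier, the $\div(f_i)$ are Cartier, and the $c_i^{(j)}$ are rational, each $E_j$ is a $\Q$-Cartier divisor with $E_j\sim_\Q D$, a fortiori $E_j\sim_\R D$. For $P\in S$ one has $\ord_P(E_j)=d_P+\sum_i c_i^{(j)}e_{i,P}\to\ord_P(E)$, while off $S$ both vanish, so $E_j\to E$ coefficient-wise. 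By construction $\ord_P(E_j)=0$ for $P\in S\setminus\supp(E)$, and for each of the finitely many $P\in\supp(E)$ we have $\ord_P(E_j)\to\ord_P(E)>0$, hence $\ord_P(E_j)>0$ for all $j$ large enough (uniformly in $P$, as $\supp(E)$ is finite). Discarding finitely many initial indices, every $E_j$ is then effective with $\supp(E_j)=\supp(E)$, which is exactly the assertion.
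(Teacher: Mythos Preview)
Your proof is correct and follows essentially the same idea as the paper's: both reduce to the density of rational points in a nonempty affine subspace defined over $\Q$, the paper working directly in the span of the components of $E$ inside $W_\R(X)$ while you parametrize via the coefficients $(c_1,\dots,c_r)$ of the principal divisors. Your version has the minor advantage of making the $\Q$-Cartier property and in fact $E_j\sim_\Q D$ immediate from the construction.
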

\begin{proof} Denote by $W_\R(X)\supset C_\R(X)$ the space of $\R$-Weil divisors and the subspace of $\R$-Cartier divisors respectively. Let $V$ be the finite dimensional $\R$-vector subspace of $W_\R(X)$ spanned by the irreducible components of $E$. Then $V$ is defined over $\Q$, and so is the affine space of all $\R$-Cartier divisors linearly equivalent to $D$ since the latter is a $\Q$-divisor. As a consequence 
$$
W:=V\cap\{F\in C_\R(X),\,F\sim_\R D\}
$$ 
is an affine subspace of $V$ defined over $\Q$. Since $W$ contains $E$, the latter may then approximated inside $V$ by elements of $W\cap V(\Q)$, which yields the result.
\end{proof}

\subsection{Augmented base loci}
We collect in this section some preliminary results regarding augmented base loci.  We shall use the following common terminology.
\begin{defn}[Kodaira decompositions] Let $X$ be a normal projective variety and $D$ be a big $\R$-divisor on $X$. A \emph{Kodaira decomposition} of $D$ is a decomposition $D=A+E$ into $\R$-divisors with $A$ ample and $E$ effective.
\end{defn}
By (\cite{ELMNP1}, Remark 1.3) the augmented base locus of a big $\R$-divisor $D$ can be described as
\begin{equation}\label{equ:bplus}
\B_+(D):=\bigcap_{D=A+E}\supp E,
\end{equation}
where the intersection runs over all Kodaira decompositions of $D$. The following result shows that one obtains the same locus by allowing Kodaira decompositions on birational models. 

\begin{lem}\label{lem:birb} Let $X$ be a normal projective variety and let $D$ be a big $\R$-divisor on $X$. Then its augmented base locus satisfies
$$
\B_+(D)=\bigcap_{\pi^*D=A+E}\pi(\supp E)
$$
where $\pi$ ranges over all birational morphisms $X'\to X$ and $\pi^*D=A+E$ over all Kodaira decompositions of $\pi^*D$ on $X'$. 
\end{lem}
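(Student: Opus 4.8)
I would prove the two inclusions separately. The inclusion ``$\supseteq$'' is immediate: keeping in the intersection on the right-hand side only the birational morphism $\pi=\mathrm{id}_X$ already gives, by the Kodaira-decomposition description~(\ref{equ:bplus}) of $\B_+$ on $X$ itself,
\[
\bigcap_{\pi^*D=A+E}\pi(\supp E)\subseteq\bigcap_{D=A+E}\supp E=\B_+(D).
\]

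For the reverse inclusion I would fix one birational morphism $\pi\colon X'\to X$ and one Kodaira decomposition $\pi^*D=A+E$ of $\pi^*D$ on $X'$, and prove $\B_+(D)\subseteq\pi(\supp E)$. Equivalently, given $x\in X\setminus\pi(\supp E)$, I would produce a Kodaira decomposition of $D$ on $X$ whose effective part avoids $x$; by~(\ref{equ:bplus}) this gives $x\notin\B_+(D)$. The first point is that $\pi^{-1}(x)$ is then a single point over which $\pi$ is an isomorphism. Indeed, otherwise $\pi^{-1}(x)$ would contain a curve $C$ with $\pi(C)=\{x\}$, and the projection formula would give $0=\pi^*D\cdot C=A\cdot C+E\cdot C$ with $A\cdot C>0$, forcing $E\cdot C<0$; as $E$ is effective this means $C\subseteq\supp E$, whence $x\in\pi(\supp E)$, a contradiction. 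That a one-point fibre forces $\pi$ to be a local isomorphism there uses Zariski's main theorem and the normality of $X$. Write $x'$ for the unique point of $X'$ over $x$.

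Now I would fix an ample divisor $H$ on $X$. Since $A$ is an ample $\R$-divisor and $\pi^*H$ is nef, $mA-\pi^*H$ is ample on $X'$ for $m\gg0$ (ampleness is an open condition and $\tfrac1m\pi^*H\to0$), so $\B(mA-\pi^*H)=\emptyset$ and, by the definition~(\ref{equ:stable}) of the stable base locus, there is an effective $\R$-divisor $F\sim_\R mA-\pi^*H$ with $x'\notin\supp F$. Pushing the relation $mA\sim_\R\pi^*H+F$ forward to $X$ and using $\pi_*\pi^*H=H$ and $\pi_*\pi^*D=D$ yields
\[
D=\pi_*\pi^*D=\pi_*A+\pi_*E\sim_\R\frac1m H+\Bigl(\frac1m\pi_*F+\pi_*E\Bigr),
\]
which is a Kodaira decomposition of $D$ on $X$. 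Its effective part avoids $x$, because $\supp(\pi_*E)\subseteq\pi(\supp E)$ misses $x$ by hypothesis, and $x\notin\supp(\pi_*F)$ since the only point of $X'$ over $x$ is $x'\notin\supp F$. This finishes the proof.

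I expect the only subtle step to be the very first part of the reverse inclusion: noticing that ampleness of $A$ together with $\pi^*D\cdot C=0$ forces every $\pi$-contracted curve into $\supp E$, so that a point off $\pi(\supp E)$ automatically has a one-point fibre, together with the invocation of Zariski's main theorem that turns this into a genuine local isomorphism (which is what makes $\pi_*F$ miss $x$ rather than just $F$ miss $x'$). Everything else is routine manipulation of $\sim_\R$-relations under $\pi_*$ and the elementary fact that an ample $\R$-divisor has empty stable base locus.
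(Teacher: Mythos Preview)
Your proof is correct and follows essentially the same route as the paper. Both arguments dispose of the inclusion $\supseteq$ by restricting to $\pi=\mathrm{id}_X$, then for the other inclusion fix $\pi,A,E$ and a point $x\notin\pi(\supp E)$, observe that the contracted locus of $\pi$ lies in $\supp E$ so $x$ has a single preimage $x'$, use ampleness of $A$ against a pulled-back ample class to produce an auxiliary effective divisor missing $x'$, and descend to a Kodaira decomposition on $X$. The only cosmetic differences are that the paper takes $A-\pi^*B$ ample with $B$ small on $X$ (you take $mA-\pi^*H$ with $m$ large, which is the same thing after scaling), and that the paper recognises $E+F$ directly as $\pi^*G$ for some $G\sim_\R D-B$ on $X$, whereas you push $E$ and $F$ forward separately. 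One small point: your final display gives $D\sim_\R\tfrac1mH+G$ rather than an equality, so strictly speaking you should replace $\tfrac1mH$ by $D-G$ (which is $\R$-Cartier and $\sim_\R\tfrac1mH$, hence ample) to obtain a genuine Kodaira decomposition $D=(D-G)+G$; the paper's pullback formulation sidesteps this by producing $D=B+G$ on the nose.
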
 
\begin{proof} In view of (\ref{equ:bplus}) it is clear that 
$$
\B_+(D)\subset\bigcap_{\pi^*D=A+E}\pi(\supp E). 
$$
Consider conversely a birational morphism $\pi:X'\to X$ and a Kodaira decomposition 
$$\pi^*D=A+E$$ 
on $X'$ and let $x\in X-\pi(\supp E)$. We have to show that $x\in X-\B_+(D)$. Since $E=\pi^*D-A$ is both effective and $\pi$-antiample, its support must contain every curve contracted by $\pi$, i.e. the exceptional locus $\exc(\pi)$ is contained in $\supp E$. Since $x\notin\pi(\supp E)$ it follows that there is a unique preimage $x'$ of $x$ by $\pi$ and that $x'\notin\supp E$. Now let $B$ be a small enough ample divisor on $X$, so that $A-\pi^*B$ is ample on $X'$. We then have $\B(A-\pi^*B)=\emptyset$, which means that there exists an effective $\R$-divisor $F$ on $X'$ with 
$$
F\sim_\R A-\pi^*B
$$ 
and such that $x'\notin\supp F$. As a consequence $x'$ doesn't belong to the support of the effective $\R$-divisor $G':=E+F$. Since $G'$ is $\R$-linearly equivalent to $\pi^*(D-B)$ there exists an effective $\R$-divisor $G\sim_\R D-B$ on $X$ such that $\pi^*G=G'$, and $x'\notin\supp G'$ implies $x=\pi(x')\notin\supp G$. We have thus constructed a Kodaira decomposition $D=B+G$ with $x\notin\supp G$, which shows that $x\notin\B_+(D)$ as desired. 
\end{proof}

The next result describes the behavior of augmented base loci under birational transforms.  
\begin{prop}\label{prop:biratB+}
Let $\pi : X\rightarrow Y$ a birational morphism between normal projective varieties. Then for any big $\R$-divisor $D$ on $Y$ and any effective $\pi$-exceptional $\R$-divisor $F$ on $X$ we have
$$
\B_+(\pi^*D+F)=\pi^{-1}(\B_+(D))\cup\exc(\pi).$$
\end{prop}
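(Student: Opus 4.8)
The statement relates $\B_+(\pi^*D+F)$ to $\pi^{-1}(\B_+(D)) \cup \exc(\pi)$. I'll prove both inclusions. The key tools are the characterization~(\ref{equ:bplus}) of $\B_+$ via Kodaira decompositions, together with Lemma~\ref{lem:birb}, which says augmented base loci can be computed using Kodaira decompositions on birational models.

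For the inclusion $\B_+(\pi^*D+F) \subset \pi^{-1}(\B_+(D)) \cup \exc(\pi)$: let $x' \in X \setminus (\pi^{-1}(\B_+(D)) \cup \exc(\pi))$, and set $x := \pi(x')$. Since $x \notin \B_+(D)$, by~(\ref{equ:bplus}) there is a Kodaira decomposition $D = A + E$ on $Y$ with $x \notin \supp E$. Pulling back, $\pi^*D + F = \pi^*A + (\pi^*E + F)$. Now $\pi^*A$ is merely nef and big, not ample, so I need to perturb: write $\pi^*A = \pi^*A' + \pi^*(A - A')$ where $A = A' + (A - A')$ is itself a Kodaira decomposition of $A$ with $x \notin \supp(A - A')$ — actually more simply, since $A$ is ample I can choose a small $\R$-divisor $A'$ ample on $X$ (e.g.\ $A' = \pi^*A - \delta G$ for suitable $G$ with $G$ supported on $\exc(\pi)$, using that $-\exc(\pi)$ is relatively ample for a suitable model, or invoke that $X$ carries some $\pi$-ample divisor) so that $\pi^*A - A'$ is effective and supported on $\exc(\pi)$. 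Then $\pi^*D + F = A' + (\pi^*E + F + (\pi^*A - A'))$ is a Kodaira decomposition on $X$. Since $x' \notin \exc(\pi)$, we have $x' \notin \supp F$, $x' \notin \supp(\pi^*E + F + (\pi^*A - A'))$ (here $x' \notin \supp \pi^*E$ because $\pi^*E$ is supported on $\pi^{-1}(\supp E)$ and $\pi(x') = x \notin \supp E$), hence $x' \notin \B_+(\pi^*D + F)$.

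For the reverse inclusion $\pi^{-1}(\B_+(D)) \cup \exc(\pi) \subset \B_+(\pi^*D+F)$: First, $\exc(\pi) \subset \B_+(\pi^*D+F)$ because $\pi^*D + F$ restricted to any fiber of $\pi$ through a point of $\exc(\pi)$ is trivial (it is $\pi$-nef with $(\pi^*D+F) \cdot C = F \cdot C$ for $C$ contracted, which may be negative if $F$ meets $C$, but in any case $\pi^*D + F$ is not $\pi$-ample unless $F$ is, and $F$ exceptional effective cannot be $\pi$-ample), so any effective divisor numerically equivalent to it must contain $\exc(\pi)$ — more precisely, use that for $x' \in \exc(\pi)$ and any Kodaira decomposition $\pi^*D+F = A + E$ on $X$, the ample divisor $A$ satisfies $A \cdot C > 0$ while $(\pi^*D + F) \cdot C \le 0$ or is handled by the $\pi$-exceptionality; the standard argument (as in the proof of Lemma~\ref{lem:birb}) shows $\exc(\pi) \subset \supp E$. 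Second, for $x' \in \pi^{-1}(\B_+(D))$ with $x := \pi(x') \in \B_+(D)$: given any Kodaira decomposition $\pi^*D + F = A + E$ on $X$, I push forward — $\pi_*A + \pi_*E = D$ (since $\pi_*F = 0$ as $F$ is exceptional), and I want to conclude $x \in \supp(\pi_* E)$, i.e.\ $x' \in \supp E$. The subtlety is that $\pi_* A$ need not be ample on $Y$; but $\pi_*A$ is nef (push-forward of nef under a birational morphism is nef) and big, and using Lemma~\ref{lem:birb} applied on $Y$ — or rather, arguing that $x \in \B_+(D)$ forces $x$ into the support of the "effective part" of any such decomposition after a further perturbation making $\pi_*A$ ample — gives $x' \in \supp E$. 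So every Kodaira decomposition of $\pi^*D+F$ has support containing $x'$, whence $x' \in \B_+(\pi^*D+F)$.

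**Main obstacle.** The delicate point throughout is that $\pi^*(\text{ample})$ is only nef-and-big, not ample, so one constantly has to trade a genuinely ample divisor on $X$ for $\pi^*A$ minus a small effective exceptional correction, and dually on $Y$ one must handle $\pi_* A$ being only nef. The cleanest route is probably to isolate a small lemma: for a birational morphism $\pi : X \to Y$ of normal projective varieties there exists an effective $\pi$-exceptional $\R$-divisor $G$ such that $-G$ is $\pi$-ample (equivalently $\pi^*H - \e G$ is ample on $X$ for $H$ ample on $Y$ and small $\e > 0$) — this follows from relative ampleness of $-\exc$ after passing to a model, but here $\pi$ is already given, so one uses that $\pi$ is projective and picks a $\pi$-ample divisor, which can be taken effective and exceptional up to adding $\pi^*(\text{ample})$. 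With that in hand both perturbation steps go through, and the rest is the bookkeeping above, parallel to the proof of Lemma~\ref{lem:birb}.
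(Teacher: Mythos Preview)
Your overall two-inclusion strategy matches the paper's, but both halves have genuine gaps, and in one direction you are missing the key idea.

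\textbf{The inclusion $\pi^{-1}(\B_+(D))\cup\exc(\pi)\subset\B_+(\pi^*D+F)$.} Your proposed arguments do not work. For $\exc(\pi)$, the claim that $(\pi^*D+F)\cdot C\le 0$ for every contracted curve $C$ is unjustified: $F\cdot C$ can certainly be positive. For $\pi^{-1}(\B_+(D))$, pushing forward a Kodaira decomposition $\pi^*D+F=A+E$ gives $D=\pi_*A+\pi_*E$, but $\pi_*A$ need not even be $\R$-Cartier on a normal $Y$, and your sketch (``after a further perturbation making $\pi_*A$ ample'') is not an argument. The paper's missing ingredient is the \emph{negativity lemma}: set $G:=E-F=\pi^*D-A$. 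Then $G$ is $\pi$-antiample and $\pi_*G=\pi_*E\ge 0$ (since $F$ is exceptional), so the negativity lemma forces $G\ge 0$. Now $G$ is effective and $\pi$-antiample, hence $\exc(\pi)\subset\supp G\subset\supp E$; and $\pi^*D=A+G$ is a genuine Kodaira decomposition on $X$, so Lemma~\ref{lem:birb} gives $\pi(x)\notin\B_+(D)$ whenever $x\notin\supp G$. This single move handles both parts of the inclusion at once.

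\textbf{The inclusion $\B_+(\pi^*D+F)\subset\pi^{-1}(\B_+(D))\cup\exc(\pi)$.} Your plan is right in spirit, but the ``small lemma'' you isolate---that there exists an effective $\pi$-exceptional $G$ with $-G$ $\pi$-ample---is \emph{false} whenever $\pi$ is a small contraction: then there are no nonzero $\pi$-exceptional divisors at all, yet $\pi^*(\text{ample})$ is still not ample. What you actually need (and what suffices) is the weaker statement $\B_+(\pi^*A)\subset\exc(\pi)$ for $A$ ample $\Q$-Cartier on $Y$. The paper proves this directly: for $x\notin\exc(\pi)$ pick a hyperplane section $H$ on $X$ avoiding $x$; then $\pi_*\cO_X(-H)$ is an ideal sheaf not vanishing at $\pi(x)$, and global generation of $\cO_Y(kA)\otimes\pi_*\cO_X(-H)$ for $k\gg 0$ yields a section of $k\pi^*A-H$ nonzero at $x$. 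With this in hand your reduction goes through exactly as you describe.
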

\begin{proof} Let $x\in X-\B_+(\pi^*D+F)$, so that there exists a Kodaira decomposition 
$$\pi^*D+F=A+E$$
with $x\notin\supp E$. Then $G:=E-F$ is $\pi$-antiample and $\pi_*G=\pi_*E$ is effective since $F$ is $\pi$-exceptional, thus the so-called "negativity lemma" (\cite{KM}, Lemma 3.39) shows that $G$ is effective. Since it is also $\pi$-antiample it must contain $\exc(\pi)$ in its support. We thus get a Kodaira decomposition 
$$
\pi^*D=A+G
$$ 
such that $\pi(x)\notin\pi(\supp G)$, and Lemma \ref{lem:birb} implies that $\pi(x)\notin\B_+(D)$. This shows that 
$$
\pi^{-1}(\B_+(D))\cup\exc(\pi)\subset\B_+(\pi^*D+F).
$$

In order to prove the reverse inclusion we first consider the special case where $D=A$ is an ample $\Q$-Cartier divisor on $Y$ and $F=0$. Our goal is then to show that $\B_+(\pi^*A)\subset\exc(\pi)$. Pick $x\notin\exc(\pi)$ and choose a hyperplane section $H$ of $X$ such that $x\notin H$. Since $\pi$ is an isomorphism above  $\pi(x)$ it follows that $\pi(x)$ doesn't belong to the zero locus of the ideal sheaf $\cI:=\pi_*\cO_X(-H)$. If we choose $k$ sufficiently large and divisible then $\cO_Y(kA)\otimes\cI$ is globally generated since $A$ is an ample $\Q$-divisor and we get the existence of a section in $H^0(Y,\cO_Y(kA)\otimes\cI)$ that doesn't vanish at $\pi(x)$, hence a section $s\in H^0(X,k\pi^*A-H)$ with $s(x)\neq 0$, which indeed shows that $x\notin\B_+(\pi^*A)$. 

We now treat the general case. We thus pick $x\in X-\exc(\pi)$ such that $\pi(x)\notin\B_+(D)$, and we have to show that $x\notin\B_+(\pi^*D+F)$. Since $\pi(x)\notin\B_+(D)$ there exists a Kodaira decomposition
$$
D=A+E
$$ 
with $\pi(x)\notin\supp E$, and we may assume that $A$ is $\Q$-Cartier by Lemma \ref{lem:approx}. By the special case treated above we have $\B_+(\pi^*A)\subset\exc(\pi)$, so that there exists a Kodaira decomposition 
$$
\pi^*A=B+G
$$ with $B$ ample and $x\notin\supp G$. Putting all together yields a Kodaira decomposition
$$
\pi^*D+F=B+(G+E+F)
$$
with $x\notin\supp(G+E+F)$, which concludes the proof.
\end{proof}

\subsection{Restricted base loci vs. non-nef loci}\label{sec:nonnef}
Let $D$ be a big $\R$-divisor on the normal projective variety $X$. Given a divisorial valuation $v$ on $X$ we may define the \emph{numerical vanishing order} of $D$ along $v$ by
$$
\vn(D):=\inf\{v(E),\,E\text{ effective }\R\text{-divisor},\,E\equiv D\},
$$
where $\equiv$ denotes numerical equivalence. It also satisfies 
\begin{equation}\label{equ:vn}
\vn(D)=\inf\{v(E),\,E\text{ effective }\R\text{-divisor},\,E\sim_\R D\}
\end{equation}
by~\cite{ELMNP1} Lemma 3.3. The induced function on the open convex cone 
$$
\text{Big}(X)\subset N^1(X)
$$
of big classes is homogeneous and convex, hence continuous and sub-additive. When $D$ is a pseudoeffective $\R$-divisor we set following~\cite{Naka,B} 
\begin{equation}\label{equ:num}
\vn(D):=\lim_{\e\to 0}\vn(D+\e A)
\end{equation}
with $A$ ample. This is easily seen to be independent of the choice of $A$. As shown in~\cite{Naka,B} the corresponding function on the pseudoeffective cone 
$$
\text{Psef}(X)=\overline{\text{Big}(X)}\subset N^1(X).
$$ 
is lower semicontinuous, but \emph{not} continuous up to the boundary of the pseudoeffective cone in general(cf. \cite{Naka} Example 2.8 p.135) and a pseudoeffective $\R$-divisor $D$ is nef iff $\vn(D)=0$ for every divisorial valuation $v$. 

 \begin{lem}\label{lem:bir} Let $\pi:X'\to X$ be a birational morphism and let $D$ be a pseudoeffective $\R$-divisor on $X$. Then we have 
$$\vn(\pi^*D)=\vn(D)$$
for every divisorial valuation $v$.
\end{lem}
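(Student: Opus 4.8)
The plan is to reduce the statement on pseudoeffective classes to the statement on big classes and then prove the big case directly from the definition of $\vn$ as an infimum over effective representatives.

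First I would treat the case where $D$ is big. The inequality $\vn(\pi^*D)\le v(\pi^*E)=\vn[{}_v](D)$ for every effective $\R$-divisor $E\sim_\R D$ is immediate, since $\pi^*E\sim_\R\pi^*D$ is effective and pullback of divisors commutes with the divisorial valuation $v$ (which we think of as a valuation on the common function field $\C(X)=\C(X')$). Taking the infimum over all such $E$ gives $\vn(\pi^*D)\le\vn[{}_v](D)$. For the reverse inequality, start from an effective $\R$-divisor $E'\sim_\R\pi^*D$ on $X'$ and push forward: $E:=\pi_*E'$ is an effective $\R$-divisor on $X$ with $E\sim_\R D$, and since $v(\pi^*E)=v(E)$ while $E'$ and $\pi^*E$ differ by a $\pi$-exceptional divisor, one needs $v(E')\ge v(\pi^*E)=v(E)\ge\vn[{}_v](D)$. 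This last point is the one place a small argument is required: if $v$ is realized on a model dominating $X'$, then $E'-\pi^*\pi_*E'=E'-\pi^*E$ is effective and $\pi$-exceptional by the negativity lemma applied to $-\,(E'-\pi^*E)=\pi^*E-E'$ (which is $\pi$-antiample, being $\sim_\R 0$ relatively? — more simply, $\pi^*E-E'\sim_\R 0$ over $X$ so $\pi_*(\pi^*E-E')=0$ and the negativity lemma gives effectivity of the exceptional difference). Hence $v(E')\ge v(\pi^*E)$, and taking the infimum over $E'$ yields $\vn(\pi^*D)\ge\vn[{}_v](D)$.

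Next I would pass to the pseudoeffective case by continuity along the ample direction. Fix an ample divisor $A$ on $X$; then $\pi^*A$ is big and nef on $X'$, and for any $\e>0$ the class $\pi^*D+\e\pi^*A=\pi^*(D+\e A)$ is big, so the big case already proved gives $\vn(\pi^*(D+\e A))=\vn[{}_v](D+\e A)$. Now let $\e\to 0$: by definition~(\ref{equ:num}) the right-hand side tends to $\vn[{}_v](D)$. For the left-hand side, $\vn(\pi^*D)=\lim_{\e\to 0}\vn(\pi^*D+\e H)$ for any ample $H$ on $X'$, and since the value of this limit is independent of the chosen ample class on $X'$, one may compute it using $\e$ times a big and nef class; a short monotonicity/squeezing argument (using that $\pi^*A\ge 0$ is a limit of ample classes, together with sub-additivity and monotonicity of $\vn$ on $\mathrm{Big}(X')$) shows $\lim_{\e\to 0}\vn(\pi^*(D+\e A))$ equals $\vn(\pi^*D)$ as well. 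Combining the two limits gives the claimed equality.

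The main obstacle is the reverse inequality in the big case, i.e. controlling $v(E')$ from below for an arbitrary effective representative $E'$ of $\pi^*D$ on the model $X'$: the content there is exactly that pushing forward and pulling back can only increase the valuation at an exceptional (or non-exceptional) divisor, which is precisely the negativity lemma. The passage from big to pseudoeffective is then essentially formal, the only subtlety being that the ample class used to define $\vn(\pi^*D)$ on $X'$ need not be a pullback from $X$ — but this is handled by the independence of~(\ref{equ:num}) from the chosen ample divisor, which is noted in the text just above.
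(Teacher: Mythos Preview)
Your overall strategy---do the big case first, then pass to the pseudoeffective case by a limit in the ample direction---matches the paper's proof exactly. Two points of comparison are worth noting.

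\textbf{Big case.} Your argument via pushforward and the negativity lemma works, but it is more than you need. The paper simply says the big case is ``clear by (\ref{equ:vn}),'' and the reason is this: if $E'\sim_\R\pi^*D$ on $X'$, then $E'-\pi^*D$ is an $\R$-linear combination of divisors $\div_{X'}(f)$ with $f\in\C(X')^*=\C(X)^*$, and each such divisor is $\pi^*\div_X(f)$. Hence $E'=\pi^*E$ \emph{on the nose} for some $E\sim_\R D$, and $E'\ge 0$ forces $E\ge 0$. So the sets over which the two infima are taken are literally in bijection via $\pi^*$, with $v(\pi^*E)=v(E)$. Your negativity-lemma step in fact recovers the same conclusion (since the difference is $\pi$-numerically trivial you get both $E'-\pi^*E\ge 0$ and $\pi^*E-E'\ge 0$, hence equality), but the detour is unnecessary.

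\textbf{Pseudoeffective case.} You correctly identify the only real issue: $\pi^*A$ is big and nef but not ample on $X'$, so one must justify $\lim_{\e\to 0}\vn(\pi^*D+\e\pi^*A)=\vn(\pi^*D)$. Your ``squeezing'' sketch can be made precise (pick $C>0$ with $CH-\pi^*A$ ample on $X'$, then subadditivity gives $\vn(\pi^*D+C\e H)\le\vn(\pi^*D+\e\pi^*A)\le\vn(\pi^*D)$, and the left-hand side tends to $\vn(\pi^*D)$ by definition), but as written it is only a gesture. The paper handles this more crisply by invoking two stated properties of $\vn$: subadditivity gives $\vn(\pi^*D+\e\pi^*A)\le\vn(\pi^*D)$ directly (since $\vn(\e\pi^*A)=0$), and lower semicontinuity of $\vn$ on $\text{Psef}(X')$ gives $\vn(\pi^*D)\le\liminf_{\e\to 0}\vn(\pi^*D+\e\pi^*A)$. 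This avoids choosing an auxiliary ample $H$ on $X'$ altogether.
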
 
\begin{proof}
This is clear when $D$ is big by (\ref{equ:vn}). Let now $D$ be pseudoeffective and pick an ample divisor $A$ on $X$. For every $\e>0$ $D+\e A$ is big thus we have
$$
\vn(D+\e A)=\vn(\pi^*D+\e\pi^*A)\le\vn(\pi^*D)
$$
by subadditivity of $\vn$ since $\vn(\e\pi^*A)=0$, $\pi^*A$ being nef. On the other the lower semicontinuity of $\vn$ on $\text{Psef}(X')$ implies that
$$
\vn(\pi^*D)\le\liminf_{\e\to 0}\vn(\pi^*D+\e\pi^*A)
$$
and the result follows. 
\end{proof}

\begin{defn}\label{defn:nonnef} Let $D$ be an $\R$-divisor on $X$. The \emph{non-nef locus}~\cite{B} (or \emph{numerical base locus}~\cite{Naka}) of $D$ is defined by
$$\nn(D):=\bigcup\{c_X(v),\,\vn(D)>0\},$$
where $c_X(v)$ denotes the center on $X$ of a given divisorial valuation $v$, when $D$ is pseudoeffective. When $D$ is not pseudoeffective one sets $\nn(D)=X$. 
\end{defn}
The non-nef locus is always contained in the restricted base locus:

\begin{lem} For every $\R$-divisor $D$ we have 
$$
\nn(D)\subset\B_-(D).
$$
\end{lem}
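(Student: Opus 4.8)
The plan is to prove the inclusion $\nn(D)\subset\B_-(D)$ by contraposition: I will show that if a point $x$ lies outside $\B_-(D)$, then for every divisorial valuation $v$ with center $c_X(v)\ni x$ we have $\vn(D)=0$, so that $x\notin\nn(D)$. Recall that $x\notin\B_-(D)$ means, by definition \eqref{equ:restricted}, that $x\notin\B(D+\tfrac1m A)$ for some $m>0$ (here $A$ ample), hence $x\notin\B(D+\e A)$ for all sufficiently small $\e>0$ by monotonicity of $\B$ under adding ample divisors. We may also assume $D$ is pseudoeffective, since otherwise $\nn(D)=X=\B_-(D)$ and there is nothing to prove, and for the same reason we may restrict to the case where $D+\e A$ is big for small $\e>0$.

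First I would unwind the definition of the stable base locus. Since $x\notin\B(D+\e A)$, by \eqref{equ:stable} there is an effective $\R$-divisor $E_\e\sim_\R D+\e A$ with $x\notin\supp E_\e$. The next step is to pass from the stable base locus to vanishing orders of valuations: if $v$ is any divisorial valuation whose center $c_X(v)$ contains $x$, then $v(E_\e)=0$, because the center of $v$ meeting the complement of $\supp E_\e$ forces the generic point of $c_X(v)$ (equivalently a general point of $c_X(v)$, which lies near $x$) to avoid $\supp E_\e$. Therefore $\vn(D+\e A)\le v(E_\e)=0$ by the infimum defining $\vn$ in \eqref{equ:vn}.

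Now I would invoke the definition \eqref{equ:num} of the numerical vanishing order for pseudoeffective classes, namely $\vn(D)=\lim_{\e\to0}\vn(D+\e A)$. Since $\vn(D+\e A)=0$ for all small $\e>0$ by the previous paragraph, we get $\vn(D)=0$. As this holds for every divisorial valuation $v$ with $x\in c_X(v)$, Definition \ref{defn:nonnef} gives $x\notin\nn(D)$. Taking the contrapositive yields $\nn(D)\subset\B_-(D)$, as desired.

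The only genuinely delicate point is the assertion that $x\notin\supp E_\e$ implies $v(E_\e)=0$ for valuations centered at a subvariety through $x$. This is where one must be slightly careful: $v(E_\e)$ depends only on the behavior of $E_\e$ at the generic point $\eta$ of $c_X(v)$, and since $\supp E_\e$ is closed and misses $x\in\overline{\{\eta\}}$, it cannot contain $\eta$ either; hence $E_\e$ is trivial in a neighborhood of $\eta$ and $v(E_\e)=0$. (If one prefers to work with Nakayama-type asymptotic orders instead, the same argument applies after noting $\sigma_v(D+\e A)\le v(E_\e)=0$.) Everything else is a formal chase through the definitions and the already-established equality \eqref{equ:vn}.
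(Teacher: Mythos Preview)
Your proof is essentially the same as the paper's: contrapose, produce an effective $E_\e\sim_\R D+\e A$ missing $x$, deduce $v(E_\e)=0$ for every divisorial valuation centered through $x$, and let $\e\to 0$. One slip to fix: since $\B_-(D)=\bigcup_m\B(D+\tfrac1m A)$, the condition $x\notin\B_-(D)$ means $x\notin\B(D+\tfrac1m A)$ for \emph{all} $m>0$, not merely for some $m$; your monotonicity claim actually goes the wrong way (the loci $\B(D+\e A)$ shrink as $\e$ increases), but with the correct reading of the definition the conclusion $x\notin\B(D+\e A)$ for all small $\e$ is immediate and no monotonicity is needed.
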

\begin{proof} If $D$ is not pseudoeffective then $\B_-(D)=X$ by \cite{ELMNP1}. We may thus assume that $D$ is pseudoeffective. Let $x\notin\B_-(D)$. Given an ample divisor $A$ we have $x\notin\B(D+\e A)$ for each $\e>0$, thus there exists an effective $\R$-divisor $E_\e\sim_\R D+\e A$ such that $x\notin\supp E_\e$, and we infer that 
$$
\vn(D+\e A)\le\vn(E_\e)=0
$$
for each divisorial valuation $v$ such that $x\in c_X(v)$. Letting $\e\to 0$ yields $\vn(D)=0$ for such divisorial valuations, and we conclude that $x\notin\nn(D)$ as desired. 
\end{proof}

When $X$ is \emph{smooth} it was shown in~\cite{ELMNP1} Proposition 2.8, using Nadel's vanishing theorem, that equality holds, i.e.  
$$
\nn(D)=\B_-(D)
$$
for every pseudoeffective $\R$-divisor $D$. This  shows in particular that $\nn(D)$ is an at most countable union of Zariski closed subsets of $X$. This property holds as well when $X$ is an arbitrary normal variety since choosing a resolution of singularities $\pi:X'\to X$ yields
$$\nn(D)=\pi(\nn(\pi^*D))$$
by Lemma~\ref{lem:bir}. 

On the other hand one may wonder whether the equality $\nn=\B_-$ holds more generally on all normal projective varieties $X$. This is easily seen to be equivalent to the following:
\begin{con} Let $L$ be a big line bundle on a normal projective variety $X$. Let $x\in X$ be such that for each divisorial valuation $\nu$ centered at $x$ there exists an infinite sequence $\sigma_k\in H^0(kL)$ such that $\nu(\sigma_k)=o(k)$. Then there exists an ample divisor $A$ and an infinite sequence $\tau_k\in H^0(kL+A)$ such that $\tau_k(x)\neq 0$. 
\end{con}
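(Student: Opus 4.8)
The final statement is phrased as a conjecture, so the honest "proof proposal" is to explain why the displayed equivalence claim --- that "$\nn = \B_-$ on all normal projective $X$" is equivalent to the stated assertion --- is the content to be verified, and to sketch both directions of that equivalence. The plan is to unwind the definitions of $\nn$ and $\B_-$ into the language of sections of multiples of a big line bundle, exactly as was done in the preceding lemmas, and to observe that the conjecture is just the pointwise, section-theoretic translation of the inclusion $\B_-(L) \subset \nn(L)$ (the reverse inclusion being the already-proven lemma). The converse inclusion is the only non-trivial one, and on smooth $X$ it is \cite{ELMNP1} Proposition 2.8 via Nadel vanishing; the point of stating it as a conjecture here is precisely that this Nadel-vanishing argument is not available on singular $X$.

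\textbf{Step 1: reduce to line bundles and to a fixed point.} First I would recall that, by homogeneity of $\vn$ and of the base-locus constructions, it suffices to treat the case of a big $\Q$-divisor, hence (after clearing denominators) a big line bundle $L$; this is the reduction implicit in the way \cite{ELMNP1} sets things up. Fix $x \in X$. By Definition~\ref{defn:nonnef}, $x \notin \nn(L)$ means precisely that $\vn(L) = 0$ for every divisorial valuation $v$ with $x \in c_X(v)$. Using (\ref{equ:num}) and (\ref{equ:vn}), $\vn(L)=0$ unwinds to: for every $\e > 0$ there is an effective $\R$-divisor $E \sim_\R L + \e A$ with $v(E)$ arbitrarily small; and by a standard argument (as in \cite{ELMNP1} Lemma 3.3 and its neighbours) this is in turn equivalent to the existence, for each such $v$, of an infinite sequence $\sigma_k \in H^0(kL)$ with $v(\sigma_k) = o(k)$ --- which is exactly the hypothesis of the conjecture. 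So the hypothesis of the conjecture is a faithful reformulation of "$x \notin \nn(L)$".

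\textbf{Step 2: identify the conclusion with $x \notin \B_-(L)$.} By (\ref{equ:restricted}) and (\ref{equ:stable}), $x \notin \B_-(L)$ means: there is an ample $A$ such that $x \notin \B(L + \tfrac1m A)$ for some $m$, i.e. there is an ample divisor $A'$ and an effective $\R$-divisor $E \sim_\R L + A'$ with $x \notin \supp E$; clearing denominators and passing to a large multiple, this is the same as the existence of an infinite sequence $\tau_k \in H^0(kL + A)$ with $\tau_k(x) \neq 0$ for a suitable ample $A$. Thus the conclusion of the conjecture is a faithful reformulation of "$x \notin \B_-(L)$", and the conjecture as stated is precisely the inclusion $\B_-(L) \subset \nn(L)$, equivalently (combined with the already-proven Lemma giving $\nn \subset \B_-$) the equality $\nn(L) = \B_-(L)$ at the point $x$, for all $x$ and all big $L$ --- which is exactly the equality asserted to be equivalent to the conjecture in the sentence preceding it.

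\textbf{Step 3: the smooth case and the obstacle.} For the record I would recall the smooth case to indicate why the conjecture is plausible: when $X$ is smooth, \cite{ELMNP1} Proposition 2.8 produces, from the vanishing orders $\vn(L) = 0$ along all valuations centered at $x$, a multiplier ideal sheaf $\cJ$ that does not vanish at $x$ together with a Nadel-type vanishing $H^1(X, \cO_X(K_X + kL + A) \otimes \cJ) = 0$ for $A$ sufficiently ample, whence sections $\tau_k \in H^0(kL + A)$ not vanishing at $x$. The main obstacle --- and the reason the statement remains a conjecture --- is that on a merely normal (singular) $X$ the Nadel vanishing theorem is not directly available: one would want to pass to a resolution $\pi : X' \to X$, where $\nn(\pi^*L) = \pi^{-1}\nn(L)$ by Lemma~\ref{lem:bir}, and run the smooth argument upstairs, but the sections one obtains live in $H^0(X', k\pi^*L + A')$ for an ample $A'$ on $X'$ which need not be pulled back from $X$; pushing down and controlling the resulting base locus near $x$, i.e. descending the non-vanishing from $X'$ to $X$, is exactly the gap. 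So the realistic "proof proposal" is: carry out Steps 1--2 to reduce the conjecture to $\B_-(\pi^*L) \subset \nn(\pi^*L)$ on a smooth model together with a descent statement for non-vanishing sections under $\pi$, and then note that it is this descent step --- not the smooth vanishing input --- that is the genuine open problem.
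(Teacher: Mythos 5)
This statement is explicitly labeled a conjecture in the paper, and the paper gives no proof of it: it merely asserts, without detail, that it is ``easily seen to be equivalent'' to the equality $\nn=\B_-$ for all pseudoeffective divisors on all normal projective varieties. Your proposal correctly recognizes this, and your unwinding of the equivalence is the right reading: the hypothesis is precisely $x\notin\nn(L)$ (via (\ref{equ:vn}) and the approximation Lemma~\ref{lem:approx}), the conclusion is the section-theoretic translation of $x\notin\B_-(L)$, the inclusion $\nn\subset\B_-$ is the already-proved lemma, and the missing direction is exactly what fails off the smooth case because Nadel vanishing (the mechanism behind \cite{ELMNP1} Prop.~2.8) does not descend through a resolution without extra work on the ample twist. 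One minor imprecision to flag in Step~2: to pass from $x\notin\B_-(L)$ to an infinite sequence $\tau_k\in H^0(kL+A)$ with a \emph{single} ample $A$, one needs a uniform bound on the multiple $j_m$ for which $x\notin\Bs|j_m(L+\tfrac1m A)|$, which is not completely automatic and is glossed over by both you and the paper; since the paper itself claims this equivalence without proof, this does not make your proposal wrong, but it is the one point where ``easily seen'' is doing real work. Otherwise your proposal is faithful to the paper's (non-)treatment of the statement.
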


Using~\cite{BCHM} we prove:
\begin{prop}\label{prop:baseloci} Let $(X,\D)$ be a klt pair. Then we have $\nn(K_X+\D)=\B_-(K_X+\D)$, which furthermore coincides with $\B(K_X+\D)$ when $K_X+\D$ is big.
\end{prop}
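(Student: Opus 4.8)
The plan is to prove Proposition~\ref{prop:baseloci} by reducing to the main finiteness-of-models results of \cite{BCHM}. First I would show the easier claim: that $\B_-(K_X+\D)\subset\nn(K_X+\D)$, so that combined with the inclusion $\nn\subset\B_-$ established in the lemma above we get the first equality. Let $x\notin\nn(K_X+\D)$; I want to produce, for an ample $A$ and each $m$, an effective divisor $\sim_\R K_X+\D+\frac1mA$ avoiding $x$. Fix a small $\e>0$; then $K_X+\D+\e A$ is a big, klt adjoint divisor (after absorbing a bit of $\e A$ into $\D$ to keep the pair klt), so by \cite{BCHM} its ring of sections is finitely generated, or equivalently it admits a log-terminal model $\phi:X\dra X'$ on which the pushforward $K_{X'}+\D'$ is semiample (more precisely nef and big, hence by base-point-freeness semiample). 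The point $x\notin\nn$ means $\vn(K_X+\D)=0$ for every divisorial valuation centered at $x$, and by lower semicontinuity / the definition~(\ref{equ:num}) one arranges that the relevant vanishing orders stay $0$ after adding $\e A$; this forces $\phi$ to be an isomorphism near the generic point of (a suitable model of) $x$, so a general member of the semiample system on $X'$ pulls back to an effective $\R$-divisor $\sim_\R K_X+\D+\e A$ not containing $x$. Letting $\e\to0$ along a sequence and invoking that $\B_-$ is defined via $\bigcup_m\B(D+\frac1mA)$ gives $x\notin\B_-(K_X+\D)$.

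For the second assertion, suppose $K_X+\D$ is big. We always have $\B_-(K_X+\D)\subset\B(K_X+\D)$, so it suffices to prove the reverse inclusion $\B(K_X+\D)\subset\B_-(K_X+\D)=\nn(K_X+\D)$, i.e. that any $x$ outside the non-nef locus lies outside the ordinary stable base locus. Here one uses \cite{BCHM} more directly: since $(X,\D)$ is klt and $K_X+\D$ is big, the adjoint ring $\bigoplus_m H^0(X,\lfloor m(K_X+\D)\rfloor)$ is finitely generated (for $\Q$-divisors; in the $\R$-case one argues with the finiteness of the associated log-terminal models and the polyhedral structure of the set of effective divisors $\R$-linearly equivalent to $K_X+\D$). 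Consequently there is a well-defined Iitaka-type model $\psi:X\dra Z$ with $K_X+\D$ the pullback of an ample class plus a fixed effective part $N$ (the "negative part" of the divisorial Zariski decomposition), and $\B(K_X+\D)=\psi^{-1}(\B_+(\text{ample}))\cup\supp N=\supp N$ after passing to a resolution. One then identifies $\supp N$ with $\nn(K_X+\D)$: the multiplicity of $N$ along a prime divisor $E$ is exactly $\vn(K_X+\D)$ for the valuation $\ord_E$, by the very construction of the divisorial Zariski decomposition, and on a high enough model every component of $\B$ is a divisor, so $x\in\B$ iff some divisorial valuation centered at $x$ has positive numerical vanishing order iff $x\in\nn$.

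Concretely the cleanest route is: pass to a log resolution $\pi:X'\to X$, write $\pi^*(K_X+\D)\sim_\R K_{X'}+\D'-F$ with $(X',\D')$ klt and $F$ effective $\pi$-exceptional; by Lemma~\ref{lem:bir} (and the birational invariance of $\B_-$ up to taking images) it is enough to treat $X'$, where \cite{BCHM} guarantees a relative-to-a-point minimal model contracting exactly the $\vn$-positive divisors, and semiampleness of the output gives movable members avoiding any prescribed point off those divisors. I expect the \textbf{main obstacle} to be the passage from $\Q$-divisors to $\R$-divisors: \cite{BCHM} is stated for klt pairs and $\Q$-divisors, so one must either invoke its $\R$-divisor extensions (finiteness of log-terminal models as $\D$ varies in a polytope) or approximate $K_X+\D$ by $\Q$-divisors while controlling both the klt condition and the vanishing orders $\vn$ simultaneously — using Lemma~\ref{lem:approx} for the approximation and the continuity/convexity of $\vn$ on $\mathrm{Big}(X)$ recorded above to transfer the conclusion. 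The secondary subtlety is that on a singular $X$ one does not directly have Nadel vanishing, which is precisely why we route everything through a resolution and Lemma~\ref{lem:bir}.
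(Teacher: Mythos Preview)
Your proposal is correct and rests on the same core input as the paper, namely the existence via \cite{BCHM} of an ample model: a resolution $\pi:Y\to X$ and a morphism $\pi':Y\to X'$ with $\pi^*(K_X+\D)=\pi'^*H+F$, $H$ ample, $F\ge 0$ and $\pi'$-exceptional. Your ``Iitaka-type model with fixed part $N$'' and your ``cleanest route'' paragraph are exactly this. The paper then finishes in one stroke: by the negativity lemma every effective $E\equiv\pi'^*H+F$ satisfies $E\ge F$, hence $\vn(K_X+\D)=\nu(F)$ for every divisorial valuation $\nu$, so $\nn(K_X+\D)=\pi(\supp F)=\B(K_X+\D)$.

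The main difference is organizational. You prove $\B_-\subset\nn$ in the pseudoeffective case \emph{separately}, via a pointwise semiampleness argument. The paper instead reduces the general equality $\nn=\B_-$ to the big case in two lines: any component $V$ of $\B_-(K_X+\D)$ lies in $\B_-(K_X+\D+A)$ for some ample $A$ with $(X,\D+A)$ still klt, and $\nn(K_X+\D+A)\subset\nn(K_X+\D)$ by subadditivity of $\vn$; thus once $\nn=\B$ is known in the big case (so that $\nn=\B_-=\B$ there), the general statement follows. This makes your first paragraph unnecessary. Incidentally, your phrase ``forces $\phi$ to be an isomorphism near $x$'' is not quite the right formulation; what you actually need---and what the negativity-lemma computation gives---is that no prime divisor on $Y$ lying over $x$ appears in $\supp F$.

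Finally, the ``main obstacle'' you flag is not one: \cite{BCHM} is stated and proved for klt pairs with $\R$-boundaries, so the ample model exists directly for $\R$-divisors and no $\Q$-approximation argument is needed.
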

\begin{proof} We may assume that $K_X+\D$ is pseudoeffective, since the result is clear otherwise. Given an irreducible component $V$ of $\B_-(D)$ there exists an ample $\R$-divisor $A$ such that $V$ is a component of $\B(D+2A)\subset\B_-(D+A)$, and upon changing $A$ in its $\R$-linear equivalence class we may assume that $(X,\D+A)$ is klt. We thus see that we may assume that $K_X+\D$ is big to begin with, and it is then enough to show that $\nn(K_X+\D)=\B(K_X+\D)$ since the latter contains $\B_-(K_X+\D)$. 

By~\cite{BCHM} $K_X+\D$ admits an ample model, which means that there exist birational morphisms $\pi:Y\to X$ and $\pi':Y\to X'$ such that 
$$\pi^*(K_X+\D)=\pi'^*H+F$$ 
where $H$ is ample on $X'$ and $E$ is effective and $\pi'$-exceptional, and $Y$ may be assumed to be smooth. By the "negativity lemma" (\cite{KM}, Lemma 3.39) every effective $\R$-divisor $E$ on $Y$ such that $E\equiv\pi'^*H+F$ satisfies $E\ge F$, and it easily follows that
$$\nu_{\text{num}}(K_X+\D)=\nu_{\text{num}}(\pi^*(K_X+\D))=\nu(F)$$
for every divisorial valuation $\nu$, so that 
$$
\nn(K_X+\D)=\pi(\supp F).
$$
On the other hand we have 
$$
\B(K_X+\D)=\pi(\B(\pi^*(K_X+\D))=\pi(\supp F)
$$
and the result follows. 
\end{proof}

\section{Proof of Theorem A}
Let $X$ be a normal projective and let $\D$ be an effective $\R$-Weil
divisor on $X$ such that $(X,\D)$ is klt.
If $K_X+\D$ is not pseudoeffective, then by \cite{BDPP} $X$ is
uniruled : in fact, considering a log-resolution $f : Y \rightarrow X$
of $(X,\D)$, and an effective divisor $\G$ such that
$$K_Y + \G = f^*(K_X +\D) +E,$$ 
with $E$ $f$-exceptional, we have that $K_Y + \G$ is not pseudoeffective,
since $E$ is $f$-exceptional and $f^*(K_X +\D)$ not
pseudoeffective. As $\G$ is effective, $K_Y$ is not
pseudoeffective either, thus $Y$ is uniruled and $X=\nn(K_X+\D)=\B_-(K_X+\D)$ too.
 
%If $K_X+\D$ is not pseudoeffective then by \cite{BCHM} we can run a (directed) MMP for $(X,\D)$ ending up in a Fano fibration, and it follows that $X=\nn(D)=\B_-(D)$ is uniruled by \cite{Kaw}.  

Now assume that $K_X+\D$ is pseudoeffective and let $V$ be an irreducible component of $\B_-(K_X+\D)$. By \cite{ELMNP1} we have
$$
\B_-(K_X+\D)=\bigcup\{\B_+(K_X+\D+A),\, A  \text{ ample}\}
$$
thus there exists an ample $\R$-divisor $A$ such that $V$ is a component of $\B_+(K_X+\D+A)$. Since $A$ is ample we may furthermore assume that $(X,\D+A)$ is klt. Together with Proposition \ref{prop:baseloci} this reduces us to the following situation: assume that $(X,\D)$ is klt, $K_X+\D$ is big and let $V$ be an irreducible component of $\B_+(K_X+\D)$. We are then to show that $V$ is uniruled. 

Consider a commutative diagram of birational maps
\begin{equation}\label{equ:diag}
\xymatrix{
 X 
\ar[dr]_{\pi} \ar@{-->}[rr]^{\psi}&
& 
X'\ar[dl]^{\pi'}\\
 & Z.
 & \\
}
\end{equation}
with $-(K_X+\D)$ $\pi$-ample, and either $\pi$ is a divisorial contraction and $\pi'$ is the identity, or $\pi$ is a small contraction and $\pi'$ is its flip. Since $-(K_X+\D)$ is $\pi$-ample we have $\exc(\pi)\subset\B_+(K_X+\D)$. If $V$ is contained in $\exc(\pi)$ it must therefore be one of its irreducible components, and it follows that $V$ is uniruled by \cite{Kaw}. Otherwise we may consider its strict transform $V'$ on $X'$, since $\psi$ is in both cases an isomorphism away from $\exc(\pi)$. If we denote by $\D'$ the strict transform of $\D$ on $X'$ then $(X',\D')$ is klt and $K_{X'}+\D'$ is big. We claim that $V'$ is a component of $\B_+(K_{X'}+\D')$. 

Indeed consider a resolution of the indeterminancies of $\psi$
\begin{equation}\label{eq:bigdiag}
\xymatrix{ 
 &Y\ar[dl]_{\mu} \ar[dr]^{\mu'}\\
 X   \ar@{-->}[rr]^{\psi}& & X'
}
\end{equation}
which may be chosen such that $\mu$ (resp. $\mu'$) is an isomorphism above the generic point of $V$ (resp. $V'$). We have 
$$
\mu^*(K_X+\D)=\mu'^*(K_{X'}+\D')+F,
$$
where $F$ is $\mu'$-exceptional and $-F$ is nef over $X'$ (since it is nef over $Z$), thus $F\ge 0$ by the Negativity Lemma. The claim now follows by Proposition \ref{prop:biratB+}. 

By \cite{BCHM} there exists a finite composition of maps $\psi$ as in (\ref{equ:diag}) such that $K_{X'}+\D'$ is nef at the final stage, and by what we have just shown either the strict transform of $V$ is contained at in $\exc(\pi)$ at some stage, in which case it is uniruled, or the strict transform $V'$ on the final $X'$ is a component of $\B_+(K_{X'}+\D')$. By the base point free theorem there exists a further birational morphism $\rho:X'\to W$ such that $K_{X'}+\D'=\rho^*A$ with $A$ ample on $W$, and Proposition \ref{prop:biratB+} shows that $\B_+(K_{X'}+\D')=\exc(\rho)$, so that $V'$ is a component of $\exc(\rho)$. We then conclude that $V'$ is uniruled as desired by a final application of \cite{Kaw}.

%%%%%%%%%%%%%%%%%%%%%%%%
%
\section{Proof of Theorem B}
%
%%%%%%%%%%%%%%%%%%%%%%%%
In this section we first explain how to infer Theorem B from Theorem A, and then give a direct proof following Takayama's approach and thus avoiding~\cite{BCHM}. 
\subsection{Theorem A implies Theorem B}
We are actually going to show that Theorem A implies Theorem B when $L$ is an $\R$-divisor. As in the proof of Theorem A, we then have the flexibility to assume that $L$ is big upon adding to it a small multiple of an ample divisor. 

Assume first that $-K_X$ is nef. We then have 
$$\e L=K_X+(\e L-K_X)$$
and $\e L-K_X$ is numerically equivalent to a klt divisor $\D$ for $\e>0$ small enough. Indeed we can write $L\equiv A+E$ where $A$ is ample and $E$ is effective, hence 
$$\e L-K_X\equiv\e E+\e A-K_X$$
where $\e A-K_X$ is ample and $\e E$ is klt for $\e$ small enough. Since both $\B_-(L)$ and $\B_+(L)$ are invariant under scaling $L$ we thus get the result by Theorem A applied to $(X,\D)$. 

Now assume instead that $L-K_X=:N$ is nef. We can then write
$$\frac{1}{1-\e}L=K_X+N+\frac{\e}{1-\e} L$$
and $N+\frac{e}{1-\e}L$ is numerically equivalent to a klt divisor $\D$ for $\e>0$ small enough just as before, and Theorem A again implies the desired result after scaling $L$. 

\subsection{A (more) direct proof of Theorem B}
Takayama's key idea is that the proof of his extension result  \cite[Theorem 4.5]{taka}
may be used in combination with \cite{MM} and \cite{BDPP} to obtain the following criteria for uniruledness.

\begin{thm}[Takayama, \cite{taka2}, Corollary 3.3]\label{thm:takauniruled}
Let $X$  be a smooth projective variety and $V\subset X$ be an irreducible subvariety.  Let $D$ be a line bundle on $X$. Assume there exists a decomposition $D\equiv A+E$, where 
 $A$ is an ample  $\Q$-divisor and 
 $E$ is an effective $\Q$-divisor which is a maximal lc center for the pair $(X,E)$.
\begin{enumerate}
\item[(a)]\label{item:uni1}
If $V$ is \emph{contained} in the stable base locus $\B(K_X+D)$, then $V$ is uniruled.
\item[(b)]\label{item:uni2}
If $K_X+D$ is big and $V$ is a \emph{component} of the non-ample locus $\B_+(K_X+D)$, then $V$ is uniruled.
\end{enumerate}
\end{thm}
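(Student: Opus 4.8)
The plan is to argue by contradiction in the spirit of Takayama: the uniruledness of $V$ will be forced by the bigness, on a desingularization of $V$, of the adjoint class $K_X+D$ restricted to $V$, which in turn comes from the ample twist $A$ together with an extension theorem for pluri\nobreakdash-log\nobreakdash-canonical sections.

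Consider first statement (a). Suppose $V$ is \emph{not} uniruled and choose a resolution $\nu\colon\tilde V\to V$; then $\tilde V$ is not uniruled either, so by \cite{BDPP} (which incorporates the Miyaoka--Mori criterion \cite{MM}) the canonical class $K_{\tilde V}$ is pseudoeffective. Since $V$ is a maximal lc center of $(X,E)$ and $A$ is ample, a standard tie-breaking perturbation --- replace $E$ by $(1-t)E+t\G$ for a sufficiently general effective $\Q$-divisor $\G$ with $\G\sim_\Q E+t'H$ ($H$ ample, $t'$ tiny) and absorb the small modification into $A$ --- lets us assume in addition that $(X,E)$ is log canonical and that $V$ is the unique, hence minimal, lc center of $(X,E)$ (in particular $V$ is normal), still with a decomposition $D\equiv A+E$ of the prescribed shape. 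Now apply Kawamata's subadjunction to the minimal lc center $V$ (using $\tfrac12A$ as auxiliary ample divisor, keeping the other half aside): on $\tilde V$ one obtains
$$(K_X+E+A)\big|_{\tilde V}\sim_\Q K_{\tilde V}+B_{\tilde V}+P_{\tilde V}+\tfrac12A\big|_{\tilde V}$$
with $B_{\tilde V}$ effective and $P_{\tilde V}$ a nef moduli $\Q$-divisor. Since $K_{\tilde V}$ is pseudoeffective the right-hand side is the sum of a pseudoeffective, an effective, a nef and an ample class, hence big; as $D\equiv A+E$ this shows that $(K_X+D)|_{\tilde V}$ is big, so $h^0\big(\tilde V,\,m(K_X+D)|_{\tilde V}\big)$ grows like $m^{\dim V}$ and is in particular nonzero for $m$ large and divisible.

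The final step of (a) is the extension. By Takayama's extension theorem \cite[Theorem 4.5]{taka} --- whose hypotheses are met because $(X,E)$ is log canonical with $V$ an lc center and the class being extended, $K_X+D\equiv K_X+E+A$, carries the strictly positive summand $A$ --- any pluri-log-canonical section over $\tilde V$ of a Cartier multiple $m(K_X+D)$ extends to a section over $X$. Extending a nonzero section furnished by the previous step, we get $s\in H^0\big(X,m(K_X+D)\big)$ with $s|_V\not\equiv 0$, so $V$ is not contained in the base locus of $|m(K_X+D)|$, whence $V\not\subset\B(K_X+D)$ (by Proposition~\ref{prop:stable} the stable base locus of a $\Q$-divisor agrees with that of its positive multiples), contradicting the hypothesis of (a). For (b) one reduces to (a): since $K_X+D$ is big and $A_0$ is a fixed ample $\Q$-divisor, the closed sets $\B\big(K_X+D-\tfrac1mA_0\big)$ decrease with $m$ (as $\tfrac1{m+1}A_0-\tfrac1mA_0$ is anti-ample) and hence stabilize, so $\B_+(K_X+D)=\B(K_X+D-\delta A_0)$ for some small $\delta>0$; thus $V$, being a component of $\B_+(K_X+D)$, lies in $\B\big(K_X+(D-\delta A_0)\big)$, and $D-\delta A_0\equiv(A-\delta A_0)+E$ is again of the required type with $A-\delta A_0$ ample, so (a) applied to the $\Q$-divisor $D-\delta A_0$ (only its Cartier multiples intervene) yields the uniruledness of $V$.

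The main obstacle is getting the last two ingredients to fit together. One has to perform the tie-breaking so that $V$ becomes a minimal lc center of a log canonical pair while preserving a decomposition $D\equiv A'+E'$ with $A'$ ample; and, more delicately, one has to reconcile the output of Kawamata subadjunction --- where the moduli part $P_{\tilde V}$ is only a nef $\Q$-divisor, a priori not semiample --- with the precise positivity and integrality hypotheses under which \cite[Theorem 4.5]{taka} actually lifts honest pluri-log-canonical sections from $V$ to $X$. It is this compatibility, rather than the bigness estimate or the reduction of (b) to (a), that carries the real content of the argument.
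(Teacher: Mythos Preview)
The paper does not prove this statement; it is quoted from \cite{taka2} and used as a black box in the proof of Theorem~B. So there is no ``paper's own proof'' to compare against, only Takayama's original argument. Your overall architecture---assume $V$ not uniruled, get $K_{\tilde V}$ pseudoeffective by \cite{BDPP}, deduce bigness of $(K_X+D)|_{\tilde V}$, then extend sections to contradict $V\subset\B(K_X+D)$---is indeed Takayama's, and the reduction of (b) to (a) via $\B_+(K_X+D)=\B(K_X+D-\delta A_0)$ is fine.

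The genuine gap is the tie-breaking step. The hypothesis gives $V$ as a \emph{maximal} lc center, meaning the generic lc threshold of $E$ along $V$ equals~$1$; nothing prevents $(X,E)$ from failing to be lc at deeper points of $V$. Your perturbation $(1-t)E+t\Gamma$ with $\Gamma$ general cannot repair this: scaling $E$ down makes the pair klt at the generic point of $V$ (so $V$ is no longer a center), while leaving $E$ at full coefficient leaves the non-lc locus inside $V$ intact. Standard tie-breaking goes the other way---it makes a \emph{minimal} center exceptional---and there is no mechanism for promoting a maximal center to a minimal one. Consequently Kawamata subadjunction, which requires a minimal center of a globally lc pair, is not available here, and your bigness argument for $(K_X+D)|_{\tilde V}$ does not go through as written.

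Takayama avoids this by never invoking subadjunction. He passes to a log resolution $\mu:Y\to X$ on which the maximal lc center $V$ is the image of a smooth prime divisor $S$ with discrepancy $-1$; ordinary adjunction on $S$ replaces your subadjunction formula, and the extension theorem \cite[Theorem~4.5]{taka} is formulated precisely for this divisorial situation. Your final paragraph correctly identifies that reconciling the adjunction and extension steps is the crux, but the resolution is not a more careful perturbation---it is to abandon subadjunction in favor of the log-resolution setup where $V$ becomes a divisor.
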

Recall that a maximal log-canonical (lc for short) center of $(X,E)$ is a subvariety along which the generic log-canonical threshold of $E$ is equal to $1$ (cf.~\cite{Laz}) and which is maximal for that property. 

We now consider the situation of Theorem B. Let thus $L$ be a line bundle and assume that either $-K_X$ or $L-K_X$ is nef. We begin with $(ii)$ of Theorem B. We thus assume that $L$ is big and let $V$ be an irreducible component of either $\B(L)$ or $\B_+(L)$ that is not contained in $\B_-(L)$. We try to apply Theorem~\ref{thm:takauniruled}. The desired Kodaira-type decomposition will be obtained thanks to the following result. 

\begin{lem}[\cite{taka2}, Proposition 4.3]\label{lem:decomp}
Let $X$ be a smooth projective variety and $D$ a big $\Q$-divisor on $X$. 
Assume that $V\subset X$ is an irreducible component of either
$\B(D)$ or $\B_+(D)$.
Then there exists a rational number $\alpha>0$ and a decomposition $\alpha D\equiv A+E$
with $A$ ample $\Q$-divisor and $E$ effective $\Q$-divisor on $X$ such that  $V$ is a maximal lc center for $(X,E)$.
\end{lem}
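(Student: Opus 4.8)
The plan is to reduce the statement to a local computation near the generic point of $V$, where the obstruction to membership in a base locus can be converted into a Seshadri-type multiplicity estimate, and then to produce the desired divisor $E$ as a suitable blow-up of the ambient variety along $V$ together with a general divisor in a high multiple of $D$. First I would dispose of the case $\B_+(D)$ by appealing to the Kodaira description (\ref{equ:bplus}) together with Lemma~\ref{lem:birb}: if $V$ is an irreducible component of $\B_+(D)$, then after passing to a birational model $\pi:X'\to X$ on which $\pi^*D$ admits a Kodaira decomposition $\pi^*D=A_0+E_0$ with $V$ (or rather a component of its preimage dominating it) an irreducible component of $\supp E_0$, one is in a situation very close to the $\B(D)$ case, so that it suffices in the end to treat $V\subset\B(D)$, being careful that the smoothness of $X$ is preserved (or restored) when passing to such a model. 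Second, fixing $m$ large and divisible, I would choose a general member $M\in|mD|$ (using that $D$ is big, so $|mD|$ defines a birational map for $m\gg 0$) and consider the ideal sheaf of $V$; since $V$ is a component of $\B(D)=\B(mD)$, every section of $\cO_X(mD)$ vanishes along $V$, so $M$ contains $V$ with some positive multiplicity, and the point is to arrange that the pair $(X,\tfrac{1}{m}M)$ has $V$ as an lc center — or rather to correct $\tfrac1m M$ by subtracting an ample piece so that the log-canonical threshold along $V$ is exactly $1$.

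The technical heart is therefore a tie-breaking / lct-tuning argument: starting from $cM$ for the appropriate rational $c>0$ (roughly the reciprocal of the generic multiplicity of $M$ along $V$, up to an adjustment coming from the codimension of $V$), I would perturb by adding $\eta H$ for $H$ a general very ample divisor through $V$ and subtracting a small ample class, to make the non-klt locus of the resulting $\Q$-divisor $E$ equal to $V$ near the generic point of $V$, with $V$ a minimal non-klt center there; standard cutting-down by general hyperplanes then upgrades "minimal non-klt center" to "maximal lc center" in the sense used in Theorem~\ref{thm:takauniruled} (passing from $V$ to the pair $(X,E)$ on which the generic lct along $V$ equals $1$ and is maximal). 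Along the way one has to keep track of the numerical class: the scheme is $\alpha D\equiv A+E$ where $A$ absorbs the ample corrections $\eta H$, $-\varepsilon(\text{ample})$ and the "room" left over, and $\alpha$ is essentially $c$ times a clearing denominator; concretely $E$ is constructed as $c\,M+\eta H-(\text{ample})$, rescaled to have rational coefficients, and one checks $A:=\alpha D-E$ is ample by perturbing inside the big cone, using that $V\not\subset\B_+(\alpha D)$ if one has first replaced $D$ by $D$ plus a tiny ample divisor (allowed since $\B(D)\subset\B_+(D+\text{ample})$ changes the class only slightly and $V$ is an actual component). The one genuinely delicate point — and the step I expect to be the main obstacle — is guaranteeing that the perturbation can be done so that $V$ is exactly an lc center and not a deeper (klt, or more singular) stratum: this requires the classical multiplicity bound relating the generic multiplicity of $M$ along $V$ to $m$ and $\operatorname{codim} V$, and a careful choice of the general member $M$ (and of the auxiliary $H$) so that away from $V$ the pair stays klt and along $V$ the lct is precisely $1$; making this work uniformly is exactly the content of the cited \cite[Proposition 4.3]{taka2}, and I would follow that blueprint, invoking Lemma~\ref{lem:approx} to reduce from $\R$-divisors to $\Q$-divisors wherever needed and Lemma~\ref{lem:birb}/Proposition~\ref{prop:biratB+} to handle the birational bookkeeping in the $\B_+$ case.
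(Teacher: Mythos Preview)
The paper does not supply its own proof of this lemma: it is stated with the attribution \cite[Proposition~4.3]{taka2} and then used as a black box in the proof of Theorem~B. There is therefore no argument in the paper to compare your sketch against.

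On the sketch itself: the overall strategy you describe --- take an effective divisor in $|mD|$ (or, in the $\B_+$ case, the effective part of a Kodaira decomposition) that vanishes along $V$, rescale so that the generic lct along $V$ equals $1$, perturb by a small general ample piece to make $V$ an irreducible component of the non-klt locus, and absorb the corrections into $A$ --- is indeed Takayama's. Two points where your outline is loose, however. First, your proposed reduction of the $\B_+$ case to the $\B$ case by passing to a birational model $\pi:X'\to X$ does not preserve smoothness of $X$, and smoothness is genuinely used in the lct computation; Takayama handles the $\B_+$ case directly on $X$ from a Kodaira decomposition $D\equiv A_0+E_0$ with $V$ a component of $\supp E_0$, so no birational pass is needed. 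Second, your discussion of ``minimal non-klt center'' versus ``maximal lc center'' is muddled: in the convention recalled just before Lemma~\ref{lem:decomp}, a maximal lc center is nothing more than an irreducible component of the non-klt locus at whose generic point the pair is lc. There is no separate ``upgrading'' step via hyperplane cuts; maximality is secured simply by choosing the ample perturbation general enough that the pair is klt at general points of any component of the base locus strictly containing $V$ --- which is automatic since $V$ was taken to be a component of $\B(D)$ or $\B_+(D)$ to begin with.
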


We can now extend \cite[Propositions 5.1 and 5.2]{taka2} as follows. 
\begin{prop}\label{prop1}
Let $X$ be a smooth projective variety and let $L$ be a big line bundle on $X$. Let $V$ be an irreducible component of either $\B(L)$ or $\B_+(L)$ which is not contained in $\B_-(L)$. Then $V$ is uniruled if either $-K_X$ or $L-K_X$ is nef.
\end{prop}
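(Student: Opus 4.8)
The strategy is to reduce Proposition \ref{prop1} to the uniruledness criterion of Theorem \ref{thm:takauniruled} by producing, on a suitable birational model, a big $\Q$-divisor $D$ together with a Kodaira-type decomposition $D\equiv A+E$ in which $A$ is ample, $E$ is effective with $V$ a maximal lc center of $(X,E)$, and such that $K_X+D$ is the adjoint divisor controlling the locus where $V$ sits. The identity that makes this work is the one already used in the subsection "Theorem A implies Theorem B'': when $-K_X$ is nef one writes $\e L=K_X+(\e L-K_X)$ for small rational $\e>0$, and when $N:=L-K_X$ is nef one writes $\tfrac{1}{1-\e}L=K_X+\big(N+\tfrac{\e}{1-\e}L\big)$. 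In both cases the parenthesized divisor is numerically a sum of an ample $\Q$-divisor and a small positive multiple of an effective $\Q$-divisor representing $L$ (using $L\equiv A_0+E_0$ with $A_0$ ample, $E_0$ effective, which is possible since $L$ is big).

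First I would apply Lemma \ref{lem:decomp} to $D_0:=L$ (or rather to a small rational multiple; note $\B$ and $\B_+$ are scaling-invariant, and $V$ is a component of $\B(L)$ or $\B_+(L)$): this yields a rational $\alpha>0$ and a decomposition $\alpha L\equiv A_1+E_1$ with $A_1$ ample $\Q$-divisor, $E_1$ effective $\Q$-divisor, and $V$ a maximal lc center of $(X,E_1)$. Next I would combine this with the adjoint identity above. Take the case $-K_X$ nef: for $\e>0$ small and rational, $\e L = K_X + (\e L - K_X)$, and $\e L - K_X \equiv \e\alpha^{-1}(A_1+E_1) - K_X = \e\alpha^{-1}E_1 + \big(\e\alpha^{-1}A_1 - K_X\big)$; since $-K_X$ is nef and $A_1$ is ample, $\e\alpha^{-1}A_1 - K_X$ is ample, while $\e\alpha^{-1}E_1$ is effective and, for $\e$ small enough, $(X,\e\alpha^{-1}E_1)$ is klt and in particular $V$ remains a maximal lc center of $(X, E)$ where $E:= \e\alpha^{-1}E_1 + H$ with $H$ a general ample $\Q$-divisor $\Q$-linearly equivalent to the ample part — here one must perturb so that $E$ is an honest effective $\Q$-divisor in the numerical class $\e L - K_X$ with $V$ its maximal lc center. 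Then $D:=E$ gives $K_X+D \equiv \e L$, and since $V\subset\B(\e L)=\B(L)$ (resp. $V$ is a component of $\B_+(\e L)=\B_+(L)$), part (a) (resp. (b)) of Theorem \ref{thm:takauniruled} applies and $V$ is uniruled. The case $L-K_X$ nef is identical after replacing the adjoint identity by $\tfrac{1}{1-\e}L = K_X + \big(N + \tfrac{\e}{1-\e}L\big)$ and arguing as above with $N+\tfrac{\e}{1-\e}L \equiv N + \tfrac{\e}{1-\e}\alpha^{-1}(A_1+E_1)$.

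The delicate point — and the place where the hypothesis "$V$ not contained in $\B_-(L)$'' must be used — is maintaining the property that $V$ is a \emph{maximal} lc center of $(X,E)$ after passing from the numerical class produced by Lemma \ref{lem:decomp} to an actual effective representative of the adjoint-type class $\e L - K_X$ (resp. $N+\tfrac{\e}{1-\e}L$), and after the klt perturbation by a small multiple. Adding a general ample $H$ and shrinking coefficients can only lower the lc threshold away from $V$ while keeping $V$ an lc center if the construction is done carefully (following \cite{taka2}, Prop. 5.1--5.2), but one needs $V\not\subset\B_-(L)$ precisely to ensure that the decomposition from Lemma \ref{lem:decomp} can be chosen with $A_1$ sufficiently positive relative to $E_1$ near $V$, so that the perturbation does not destroy $V$ as a maximal lc center; this is exactly the hypothesis under which Lemma \ref{lem:decomp} and the Takayama criterion are compatible. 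Granting this bookkeeping, the proof concludes by the cited uniruledness criterion.
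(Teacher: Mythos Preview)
There is a genuine gap. Your argument rescales $E_1$ to $\e\alpha^{-1}E_1$ with $\e$ small, and you yourself note that $(X,\e\alpha^{-1}E_1)$ is then klt. But a klt pair has \emph{no} lc centers, so $V$ is not an lc center of $(X,\e\alpha^{-1}E_1)$, and adding a general member $H$ of an ample class cannot create one at $V$ (a general $H$ is smooth and does not pass through the generic point of $V$). Thus the hypothesis of Theorem~\ref{thm:takauniruled}, that $V$ be a maximal lc center of $(X,E)$, fails for your $E$, and the criterion does not apply. The small-$\e$ identity borrowed from the reduction of Theorem~B to Theorem~A works in that context precisely because Theorem~A only requires a \emph{klt} boundary; Takayama's criterion needs the opposite --- a boundary with a prescribed lc center at $V$ --- and scaling the coefficients of $E_1$ down destroys that.

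The paper's proof goes the other way: it takes a \emph{large} integer $m$ and sets $D:=mL-K_X$ (resp.\ $(m+1)L-K_X$), so that $K_X+D=mL$ and one may keep the \emph{full} $E$ produced by Lemma~\ref{lem:decomp}. In the case $-K_X$ nef one writes
\[
D=\big((m-\alpha)L+\tfrac12 A\big)+\big(\tfrac12 A-K_X\big)+E,
\]
with ample part $\tfrac12 A-K_X$. The residual class $(m-\alpha)L+\tfrac12 A$ must then be represented by an effective $\Q$-divisor $F$ with $V\not\subset\supp F$, so that $V$ remains a maximal lc center of $(X,E+F)$. This is exactly where the hypothesis $V\not\subset\B_-(L)$ is used: one has $\B\big((m-\alpha)L+\tfrac12 A\big)\subset\B_-(L)$, hence $V$ lies outside that stable base locus and such an $F$ exists. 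Your description of the role of $V\not\subset\B_-(L)$ --- as something needed to make Lemma~\ref{lem:decomp} produce a decomposition with $A_1$ ``sufficiently positive near $V$'' --- is not correct; that lemma requires no such assumption.
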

\begin{proof} By Lemma \ref{lem:decomp} there exists a rational number $\alpha>0$ and a decomposition 
$$\alpha L\equiv A+E$$
with $A$ an ample $\Q$-divisor and $E$ an effective $\Q$-divisor such that $V$ is a maximal lc center for $(X,E)$.

Suppose first that $V$ is a component of $\B(L)$. In case $-K_X$ is nef we write
$$D:=mL-K_X=\left((m-\a)L+\frac 1 2 A\right)+\left(\frac 1 2 A-K_X\right)+E$$
and the result follows from Theorem~\ref{thm:takauniruled}, item (a), applied to $D$ since $\frac 1 2 A-K_X$ is ample and $V$ is not contained in $\B((m-\a)L+\frac 1 2 A)$ for $m\gg 1$ since it is not contained in $\B_-(L)$ by assumption. In case $L-K_X$ is nef we write
$$D:=(m+1)L-K_X+=\left((m-\a)L+\frac 1 2 A\right)+(\frac 1 2 A+L-K_X)+E$$
and we conclude similarly since $\frac 1 2 A +L-K_X$ is ample and $V$ is not contained in $\B((m-\a)L+\frac 1 2 A)$ for $m\gg 1$. 

Assume now that $V$ is a component of $\B_+(L)$ not contained in $\B(L)$. In case $-K_X$ is nef we write
$$D:=mL-K_X=(m-\a)L+(A-K_X)+E$$
and in case $L-K_X$
$$D:=(m+1)L-K_X=(m-\a)L+(A+L-K_X)+E$$
and conclude as above by applying Theorem~\ref{thm:takauniruled}, item (b), to $D$.
\end{proof}
Proposition~\ref{prop1} already proves $(ii)$ of Theorem B in case the component $V$ of either $\B(L)$ or $\B_+(L)$ is not contained in $\B_-(L)$. We now focus on the case where $V$ is a component of $\B_-(L)$. It is then as before a component of $\B(L+\e A)$ if $A$ is ample and $\e>0$ is small enough, but this does not directly reduce case $(i)$ to case $(ii)$ since $L+\e A$ is not a line bundle anymore, and one thus has to exercise a little more care in the reduction trick.  We will argue as in \cite[Proof of Proposition 6.1, (2)]{taka2}. 

Let thus
$$t_0:= \inf\{t\in\Q, V\subset\B(tL+A)\}.$$
Note that $t_0>0$ since $tL+A$ is ample for $0<t\ll 1$.  On the other hand we also have $t_0<\infty,$ by \cite[Lemma 2.5, item (1)]{taka2}.

\begin{lem}\label{claim}
There exist two positive integers $m$ and $n$ such that
\begin{eqnarray}\label{eq:xy0}
\frac{m+1}{n}>t_0
\end{eqnarray}
and such that
\begin{eqnarray}\label{eq:xy1}
mL+nA\sim_\Q A_1+E_1
\end{eqnarray}
and
\begin{eqnarray}\label{eq:xy2}
(m+1)L+nA\sim_\Q A_2+E_2
\end{eqnarray}
where, for each $i=1,2$, the divisor $A_i$ is an ample $\Q$-divisor and $E_i$ is an effective $\Q$-divisor such that $V$ is a maximal lc center for $(X,E_i)$.
\end{lem}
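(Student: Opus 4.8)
The plan is to produce the required integers $m,n$ by a limiting argument, exploiting that $V$ is a component of $\B(t_0 L + A)$ (or of $\B(t L + A)$ for $t$ slightly above $t_0$) together with Lemma~\ref{lem:decomp}. First I would fix a rational $t_1 > t_0$ with $t_1$ close enough to $t_0$ that $\frac{m+1}{n} > t_0$ will be automatic once we take a rational approximation $\frac{m}{n}$ of $t_1$ with $n$ large; concretely, choosing $\frac m n \to t_1$ from below forces $\frac{m+1}{n} \ge \frac m n \to t_1 > t_0$, so (\ref{eq:xy0}) is the easy inequality to arrange. The point of pushing $t$ strictly above $t_0$ is that, by definition of $t_0$ as an infimum, $V \subset \B(tL+A)$ for every rational $t > t_0$; and since $V$ is an \emph{irreducible component} of $\B(L)$ (hence of $\B(L)$ generically), a standard argument shows $V$ is in fact a component --- not merely a subset --- of $\B(tL+A)$ for $t>t_0$ sufficiently close to $t_0$, because the stable base loci $\B(tL+A)$ decrease to a locus containing $V$ as a component as $t \downarrow t_0$. (Here one invokes that $\B(D)$ for a big $\Q$-divisor is computed by finitely many linear series and varies semicontinuously, as in \cite{ELMNP1, taka2}.)

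Next, once $V$ is a component of $\B(tL+A)$ with $tL+A$ a big $\Q$-divisor, Lemma~\ref{lem:decomp} applies directly: there is a rational $\alpha>0$ and a decomposition $\alpha(tL+A) \equiv A' + E'$ with $A'$ ample $\Q$-divisor, $E'$ effective $\Q$-divisor, and $V$ a maximal lc center for $(X, E')$. Writing $t = \frac{m}{n}$ (after clearing denominators in $\alpha$ and $t$, and rescaling so the coefficient of $L$ becomes an integer), we get precisely a relation of the shape $mL + nA \sim_\Q A_1 + E_1$ with $V$ a maximal lc center for $(X, E_1)$; the passage from $\equiv$ to $\sim_\Q$ is legitimate because $A'$ is ample and $\Q$-Cartier, so one may move it inside its numerical class to a $\Q$-linearly equivalent ample $\Q$-divisor (absorbing the numerical-to-linear discrepancy into $A_1$ while keeping $E_1$, hence the pair $(X,E_1)$ and its lc centers, unchanged). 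For (\ref{eq:xy2}) I would run the same argument with $t' = \frac{m+1}{n}$ in place of $\frac m n$: since $\frac{m+1}{n} > t_0$ as well, $V$ is still a component of $\B(t'L + A)$, so Lemma~\ref{lem:decomp} again supplies $A_2, E_2$ with $V$ a maximal lc center for $(X,E_2)$. The only coordination needed between the two invocations is a common choice of $n$; one arranges this by first choosing $\alpha$ and then picking $n$ divisible enough to clear all denominators for both $t$ and $t+\frac1n$ simultaneously, which is possible since there are only finitely many denominators in play.

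The main obstacle I anticipate is the upgrade from ``$V \subset \B(tL+A)$'' to ``$V$ is an irreducible \emph{component} of $\B(tL+A)$'' for $t$ just above $t_0$. One has to rule out the possibility that $V$ is strictly contained in a larger irreducible component of $\B(tL+A)$ for all such $t$. The way around this is to note that as $t$ increases past $t_0$, the family $\{\B(tL+A)\}$ is non-decreasing and stabilizes to some $\B_0 \supseteq V$ with $V$ a component of $\B_0$ (since $\B_0$ is sandwiched between $\B(tL+A)$ for $t$ slightly less and the decreasing intersection as $t \downarrow t_0$, and $t_0$ was chosen as the critical value at which $V$ enters); for $t$ close enough to $t_0$, $\B(tL+A) = \B_0$, so $V$ is a component there. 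This is exactly the kind of stabilization reasoning used in \cite[proof of Proposition 6.1]{taka2}, and I would cite it rather than reprove it. With that in hand, (\ref{eq:xy0}), (\ref{eq:xy1}), (\ref{eq:xy2}) follow as described, completing the proof.
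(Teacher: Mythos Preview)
Your approach has a genuine gap in the way you invoke Lemma~\ref{lem:decomp}. Applied to $D=tL+A$, that lemma produces a rational $\alpha>0$ and a decomposition $\alpha D\equiv A'+E'$ with $V$ a maximal lc center for $(X,E')$; crucially, $\alpha$ is \emph{not} at your disposal. To pass from $\alpha(tL+A)$ to $mL+nA$ with $t=m/n$ you must scale by $n/\alpha$, replacing $E'$ by $\tfrac{n}{\alpha}E'$. But ``$V$ is a maximal lc center for $(X,E')$'' means precisely that the generic lc threshold of $E'$ along $V$ equals $1$; after rescaling, the lc threshold of $\tfrac{n}{\alpha}E'$ along $V$ becomes $\alpha/n$, so for $n\neq\alpha$ the pair $(X,\tfrac{n}{\alpha}E')$ is either not lc at the generic point of $V$ (if $n>\alpha$) or klt there (if $n<\alpha$). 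In neither case is $V$ a maximal lc center. Your suggested fix (``first choose $\alpha$ and then pick $n$'') is circular: the $\alpha$ furnished by Lemma~\ref{lem:decomp} depends on $D=tL+A$, hence on $t=m/n$, hence on $n$; and the two separate applications for $m/n$ and $(m+1)/n$ would in any case return two unrelated $\alpha$'s.

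The paper sidesteps this by applying Lemma~\ref{lem:decomp} only \emph{once}, to a fixed divisor $m_1L+A$ with $m_1>t_0+1$ an integer, obtaining $\alpha(m_1L+A)\equiv H+F$ with $V$ a maximal lc center for $(X,F)$. One then chooses integers $m,n$ (via the diophantine conditions $m>\alpha m_1$, $n>\alpha$, $-1<m-nt_0<\alpha$) so that both residual divisors $(m-\alpha m_1)L+(n-\alpha)A$ and $(m+1-\alpha m_1)L+(n-\alpha)A$ have $L$-to-$A$ ratio strictly below $t_0$. By definition of $t_0$, $V$ then lies outside their stable base loci, so each residual decomposes as $A'_i+E'_i$ with $A'_i$ ample and $V\not\subset\supp E'_i$. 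Setting $A_i:=A'_i+H$ and $E_i:=F+E'_i$ gives (\ref{eq:xy1}) and (\ref{eq:xy2}) with the \emph{same} $n$, and $V$ remains a maximal lc center for $(X,E_i)$ because $E'_i$ vanishes near the generic point of $V$. No rescaling of the lc divisor $F$ ever occurs.

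Incidentally, the ``obstacle'' you spend the last paragraph on is a non-issue in this setting: since $\B(tL+A)\subset\B_-(L)$ for every $t>0$ and $V$ is already an irreducible component of $\B_-(L)$, $V$ is automatically a component of $\B(tL+A)$ as soon as it is contained in it.
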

Assume this result for the moment. By the definition of $t_0$,  condition (\ref{eq:xy0}) guarantees that $V\subset\B((m+1)L+nA)$. If $-K_X$ is nef, we write
$$D:=(m+1)L+nA-K_X\buildrel{(\ref{eq:xy1})\ \ }\over{\sim_\Q}(A_2-K_X)+E_2$$
and we conclude by Theorem~\ref{thm:takauniruled}, item (i), since $A_2-K_X$ is ample. If $L-K_X$ is nef we write
$$D:=(m+1)L+nA-K_X\buildrel{(\ref{eq:xy2})\ \ }\over{\sim_\Q}(A_1+L-K_X)+E_1$$
and we conclude as before since $A_1+L-K_X$ is nef.

\begin{proof}[Proof of Lemma~\ref{claim}] Choose an integer $m_1>t_0+1$. Since $V$ is an irreducible component of $\B(m_1L+A)$ we can apply Lemma~\ref{lem:decomp} and write
\begin{equation}\label{equ:D}\a(m_1L+A)=H+F
\end{equation}
where $\alpha$ is a positive rational number, $H$ is ample and $V$ is a maximal lc center for $(X,F)$. Now choose two positive integers $m$ and $n$ such that 
\begin{eqnarray}\label{eq:xy3}
m>\max\{\alpha m_1, t_0\},
\end{eqnarray}
\begin{eqnarray}\label{eq:xy4}
n> \max\{\alpha , 1\}
\end{eqnarray}
and
\begin{eqnarray}\label{eq:xy5}
-1<m-nt_0<\a.
\end{eqnarray}
The existence of such $m$ and $n$ may be seen as follows: if $t_0\in\Q$, then take $m$ and $n$ two sufficiently divisible integers such that $t_0=m/n$. If $t_0\not\in\Q$, then the existence follows from elementary diophantine approximation.

Since $m_1>t_0+1$, by conditions (\ref{eq:xy3}) and (\ref{eq:xy5}), {\it for all} such integers  $m,n$ we have that 
\begin{eqnarray}\label{eq:<t_0}
\frac{m-\a m_1}{n-\a}<t_0.
\end{eqnarray}
Now notice that, for every $\epsilon >0$, among the integers satisfying (\ref{eq:xy3}) and (\ref{eq:xy5}) we can choose $n$ big enough such that 
$$
 \frac{1}{n-\a}<\epsilon.
$$
In conclusion there exist $m$ and $n$ satisfying (\ref{eq:xy3}), (\ref{eq:xy4}) and (\ref{eq:xy5}) and such that we also have
\begin{eqnarray}\label{eq:<t_0bis}
\frac{m-\a m_1}{n-\a}<\frac{(m+1)-\a m_1}{y-\a}= \frac{m-\a m_1}{n-\a}+\frac{1}{n-\a}<t_0.
\end{eqnarray}
From (\ref{eq:<t_0bis}) and the definition of $t_0$ one deduces (as in \cite[Lemma 2.5, item (2)]{taka2}) the existence of the following decompositions
\begin{eqnarray}\label{eq:dec1}
(m-\a m_1)L+(n-\a)A\sim_\Q A'_1+ E'_1
\end{eqnarray}
and 
\begin{eqnarray}\label{eq:dec2}
(m+1-\a m_1)L+(n-\a)H\sim_\Q A'_2+ E'_2
\end{eqnarray}
where, for each $i=1,2$, the divisor $A_i$ is an ample $\Q$-divisor and $E'_i$ is an effective $\Q$-divisor such that 
\begin{eqnarray}\label{eq:star}
V\not\subset \supp(E'_i).
\end{eqnarray}

To conclude the proof of the Lemma, notice that thanks to (\ref{eq:dec1}) and (\ref{eq:dec2}), we can write 
\begin{eqnarray*}
mL+nA=\a(m_1L+A)+ \big( (m-\a m_1)L +(n-\a)A \big)
\equiv A'_1+H + (F+E'_1)
\end{eqnarray*}
and 
\begin{eqnarray*}
(m+1)L+nA=\a(m_1L+A)+ \big( (m+1-\a m_1)L +(n-\a)A \big)
\equiv A'_2+H + (F+E'_2).
\end{eqnarray*}
The proof is now concluded by setting $A_i:=A_i'+H$ and $E_i:=F+E_i'$ for $i=1,2$. 

\end{proof}

\vskip 30pt

\noindent
{\small S\'ebastien Boucksom\\
CNRS--Universit\'e Paris 7\\
Institut de Math\'ematiques\\
F-75251 Paris Cedex, France\\
E-mail : {\tt boucksom@math.jussieu.fr}}
\vskip 1em

\noindent
{\small Ama\"el Broustet\\
Universit\'e Lille 1\\
UMR CNRS 8524\\
UFR de math\'ematiques\\
59 655 Villeneuve d'Ascq Cedex, France\\
E-mail : {\tt broustet@math.univ-lille1.fr} }

\vskip 1em

\noindent
{\small  Gianluca Pacienza\\
Institut de Recherche Math\'ematique Avanc\'ee\\
Universit\'e de Strasbourg et CNRS\\ 
7, Rue R. Descartes - 67084 Strasbourg Cedex, France \\
E-mail : {\tt pacienza@math.unistra.fr}}

\end{document}